\newtheorem{theorem}[equation]{Theorem}
\newtheorem{corollary}[equation]{Corollary}
\newtheorem{proposition}[equation]{Proposition}
\theoremstyle{remark}
\newtheorem{remark}[equation]{Remark}
\newcommand{\E}{\mathbb{E}}
\newcommand{\N}{\mathbb{N}}
\renewcommand{\P}{\mathbb{P}}
\newcommand{\R}{\mathbb{R}}
\newcommand{\one}{{{\bf \rm 1}}}
\renewcommand{\leq}{\leqslant}
\renewcommand{\le}{\leqslant}
\renewcommand{\geq}{\geqslant}
\renewcommand{\ge}{\geqslant}
\def\ii{\mathbf z}
\def\jj{\mathbf z}
\def\rr{\mathbf r}
\def\ss{\mathbf s}
\def\tt{\mathbf t}
\def\uno{\mathbf 1}
\def\ff{\mathbf f}
\def\qq{\mathbf q}
\date{\mydate}
\begin{document}


\title{Tumor growth, $R$-positivity, Multitype branching and Quasistationarity.}
\author[A. Ferrari]{Anal\'ia Ferrari}
\address{Universidad de Buenos Aires and IMAS-CONICET, Buenos Aires, Argentina}
\email{aferrari@dm.uba.ar}
\author[P. Groisman]{Pablo Groisman}
\address{Universidad de Buenos Aires and IMAS-CONICET, Buenos Aires, Argentina}
\email{pgroisman@dm.uba.ar}
\author[K. Ravishankar]{Krishnamurthi Ravishankar}
\address{NYU-ECNU Institute of Mathematical Sciences at NYU Shanghai, 3663 Zhongshan Road North, Shanghai,
200062, China and NYU Abu Dhabi.}
\email{kr26@nyu.edu}

\begin{abstract}
Motivated by tumor growth models we establish conditions for the $R-$positivity of Markov processes and positive matrices. We then apply them to obtain the asymptotic behavior of the tumors sizes in the supercritical regime. 
\end{abstract}

\date{}
\maketitle
\section{Introduction}
\label{sec:intro}
 
Our work in this paper explores the $R-$positivity and quasistationarity for a class of models motivated by the tumor growth model studied by L. Triolo in \cite{Tr05}. His work in turn was motivated by works of \cite{IKS,S} where a continuum model of tumor growth is studied. In that model a primary tumor  starts somewhere (space is not a variable) with  one cell and grows  at a rate $h(x)$ where $h$ is taken  to be  a Gompertzian law.  That is, the number of cells of the primary tumor $x_p$ is given by the following ordinary differential equation
\[
\frac{d x_p}{dt} = h(x_p), \qquad x_p(0) =1.
\]
Here $h(x_p) =  a x_p \log (\frac{N}{x_p})$, thus asymptotically the cell size approaches $N$.
The malignant behavior is modeled by creation of one new (metastases) cell at a rate $\beta(x)$ which is an increasing function of $x$. Each new cell grows and proliferates according to the same rule as the primary tumor.
The proliferation rate $\beta(x)$ is taken to be $\beta(x) = \kappa x^r$ where $0 < r \leq 1$ and $\kappa$ is a constant called the colonization constant. The evolution of the distribution of metastases $u(x,t)$ is given by the following equation.
\[
\frac{\partial u}{\partial t}(x,t) + \frac{\partial (h u)}{\partial x}(x,t) = 0,\quad  x>1, \, t>0,
\]
with a boundary condition at $x=1$ given by  
\[
h(1) u(1,t) = \int_1^\infty \beta(x) u(x,t) dx + \beta(x_p(t)).
\]
This equation is analyzed in \cite{IKS,S}, where it is shown that $u$ increases asymptotically exponentially in $t$.

In \cite{Tr05} a birth and death model  was proposed with a view towards including the random localized (proliferation occuring at size one)  nature of cell growth as well as to include the possibility of modeling immune response  by a nonzero probability of death when the cell size is one. It is a microscopic model in which individual particles (tumor sizes) evolve independently as a birth and death process on $\N \cup\{0\}$ with $0$ being an absorbing state. The birth and death rates are chosen so that the drift matches the Gompertzian law of the continuum model. If we denote the occupation number of site $x$ by $\eta(x)$, then with rate $\sum_1^\infty \beta(x) \eta(x)$ a particle is created at site one (metastases). For such a model it is shown in \cite{Tr05} that as $t \to \infty$ the expected occupation numbers converge to 1) zero 2) a constant nonzero value or 3) diverge exponentially to infinity depending on the value of a parameter $\kappa_0$ being 1) less than one 2)  equal to one 3) greater than one. The parameter $\kappa_0$ is the expected total number particles created by one particle before being eliminated and is defined precisely below.
 
We study the asymptotic behavior of this model in the supercritical regime $\kappa_0>1$. The main tool is to identify the process with a multitype branching process and to show that its mean matrix is $R-$positive, which leads to a Kesten-Stigum type theorem for the asymptotic behavior of the distribution of tumor sizes \cite{moy,Eng1,Eng2}. In the course of the proof we provide a general criteria to establish the $R-$ positivity of positive matrices by identifying certain transformations of  them with rates matrices of absorbed Markov processes.

\subsection{The Model.} We use an interacting particle system to model our process where the particles move according to a Markov process with a countable state space $\Lambda$ and get absorbed at the state $0$. We denote $(\eta_t(x), \, x\in\Lambda)$ the number of
particles at site $x$ at time $t$.

Let $Q_0=(q(x,y), \, x,y \in \Lambda_0)$ be a rates matrix of a pure jump Markov process
on $\Lambda_0:=\Lambda \cup \{0\}$. We assume that $Q$ is irreducible, $0$ is absorbing ($q(0,y)=0$ for all $y\in \Lambda$) and denote $Q$ the restriction of $Q_0$ to $\Lambda$. We use the convention $q(x,x)=-q(x)=-\sum_{y\in \Lambda_0 \setminus \{x\}}q(x,y)$. Observe that
$q(x,0)= -\sum_{y\in \Lambda} q(x,y)$ is the absorption rate from state $x$. The process can be described as follows.
\begin{enumerate}
 \item Each particle evolves independently according to the rates matrix
$Q=(q(x,y), \, x,y \in \Lambda)$. In particular, a particle a site $x$ is absorbed at a state that we call 0 at rate $q(x,0)$.
\item Particles are created at a state that we call 1 at rate $\sum_x
\beta(x)\eta_t(x)$.
\end{enumerate}
More precisely, $(\eta_t, t\ge 0)$ is a Markov process with state space $\N^{\Lambda}$ and generator given by
\begin{equation}
 \label{generator}
 \mathcal L f (\eta) = \sum_{x\in \Lambda}\sum_{y\in \Lambda_0} q(x,y)\eta(x)[f(\eta^{x,y}) - f(\eta)] + \sum_{x \in \Lambda} \beta(x)\eta(x)[f(\eta^+)-f(\eta)].  
\end{equation}
Here
\[
 \eta^{x,y}(z) = \begin{cases}
			  \eta(z), & z\notin \{x,y\},\\
                          \eta(x)-1, & z=x\\
                          \eta(y)+1 & z=y, y \ne 0. 
                         \end{cases} \qquad 
                        \eta^+(z) = \begin{cases}
					  \eta(z), & z\ne 1,\\
					  \eta(1)+1, & z=1.
					  \end{cases}
\]
Of particular interest is the case where $Q$ is a birth and death process on $\N$ with absorption at $0$ and birth and death rates given by $\lambda_x = a x \ln (N+1)$ and $\mu_x = ax \ln (x+1)$ respectively. We denote this process by $(Y_t)_{t\ge0}$. With such a choice the drift $\lambda_x-\mu_x$ follows a Gompertzian law $g(x) = ax \ln \frac{N+1}{x+1}$. Here $x$ represents the tumor size. Also of special interest is the case $\beta(x)=\kappa x^r$ for $0<r\le 1$. In \cite{Tr05} it is proved that in this situation the process
is supercritical if and only if 
\[
\kappa_0:= \int_0^\infty \gamma_1(t)\,dt > 1.
\]
Here 
\[
 \gamma_1(t)= \sum_x \beta(x)\E_1(\one\{Y_t=x\})
\]
is the expected creation rate of a single particle. More precisely, it is proved that

\begin{enumerate}
 \item[(a)] $\kappa_0<1 \Rightarrow \E\eta_t(x)\to 0$ for all $x\in\N$.
\item[(b)] $\kappa_0=1 \Rightarrow \E\eta_t(x)$ converges to a constant.
\item[(c)] $\kappa_0>1 \Rightarrow   \E\eta_t(x)$ grows exponentially fast.
\end{enumerate}
The proof can be extended with no difficulty to our more general situation. It can also be obtained from the following observation.

We can couple the total number of particles with a single-type Galton-Watson process. For a given particle, consider the total number of children given birth by a particle during its lifetime. This is the offspring distribution. In this way the total number of particles of generation $n$ is a Galton-Watson process (but not the total number of particles at time $t$). From this observation, and by means of irreducibility, (a), (b), (c) can be extended to 

\begin{enumerate}
 \item[(d)] $\kappa_0\le 1 \Rightarrow \eta_t(x)\to 0$ for all $x\in\N$.
\item[(e)] $\kappa_0>1 \Rightarrow   \eta_t(x)$ grows exponentially fast for all $x\in \N$.
\end{enumerate}
Both statements hold almost surely.


The process can be constructed in a standard way and so, we omit the proof of its existence. 

After proving general conditions to guarantee $R-$positivity of positive matrices and the asymptotic behavior of supercritical multitype branching process we will obtain for this model the following.

\begin{theorem}
\label{main.theorem}
Under adequate assumptions on $\beta$ and $Q$, if $\kappa_0>1$ there is positive probability of non-extinction and on this event we have
\begin{equation}
\label{lim.proportions}
 \lim_{t\to \infty} \frac{\eta_t}{|\eta_t|} = \nu, \qquad \text{in  probability}.
\end{equation}
\end{theorem}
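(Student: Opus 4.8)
The plan is to realize $(\eta_t)_{t\ge0}$ as a continuous-time multitype branching process whose types are the sites $x\in\Lambda$, to reduce the statement to the $R$-positivity of its mean matrix, and then to invoke a Kesten--Stigum type limit theorem. First I would compute the mean semigroup $m_t(x,y)=\E_x[\eta_t(y)]$, the expected number of particles at $y$ at time $t$ starting from a single particle at $x$. Applying the generator \eqref{generator} to the test functions $\eta\mapsto\eta(z)$ and using $\sum_{y\in\Lambda_0}q(x,y)=0$, one finds that $m_t$ solves $\frac{\dd}{\dd t}m_t=m_t A$ with $m_0=\mathrm{Id}$, where
\[
A(x,y)=q(x,y)+\beta(x)\,\I\{y=1\}.
\]
Thus $A=Q+B$, with $B(x,y)=\beta(x)\I\{y=1\}$ encoding the creation of metastases at site $1$, and the mean matrix of the branching process is the positive semigroup $(e^{tA})_{t\ge0}$.

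Next I would apply the general $R$-positivity criterion established earlier to the mean matrix $A$. Supercriticality should guarantee that the Perron--Frobenius spectral bound $\lambda^\ast$ of $A$ is strictly positive: since $\kappa_0=\int_0^\infty\gamma_1(t)\,\dd t$ is the expected number of offspring produced by a single particle over its lifetime, the condition $\kappa_0>1$ is exactly $\lambda^\ast>0$. The criterion reduces the $R$-positivity of $A$ to a recurrence property of an associated Markov process obtained by transforming $A$ --- concretely, the $h$-transform $\widetilde A(x,y)=h(y)A(x,y)/h(x)-\lambda^\ast\delta_{x,y}$ by the strictly positive right eigenfunction $h$, $Ah=\lambda^\ast h$, which is the rates matrix of a genuine (conservative) Markov process. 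In this reduction $R$-positivity of $A$ corresponds to positive --- rather than null --- recurrence of the transformed dynamics, equivalently to the quasi-stationary regime of the associated absorbed process, with the left eigenmeasure $\nu$, $\nu A=\lambda^\ast\nu$, playing the role of the limiting distribution.

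The technical heart, and the step I expect to be the main obstacle, is verifying the hypotheses of this criterion for the model at hand: producing $\lambda^\ast$ together with a strictly positive eigenfunction $h$ and eigenmeasure $\nu$ for the infinite matrix $A$, and checking the summability conditions that distinguish $R$-positivity from $R$-nullity. For the distinguished birth-and-death dynamics $(Y_t)$ with $\lambda_x=ax\ln(N+1)$, $\mu_x=ax\ln(x+1)$ and $\beta(x)=\kappa x^r$, this amounts to a tail analysis governed by the competition between the Gompertzian drift $g(x)=ax\ln\frac{N+1}{x+1}$ and the creation rate $\beta$; the adequate assumptions on $\beta$ and $Q$ are precisely those making this analysis succeed.

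Finally, with $A$ shown to be $R$-positive, I would invoke the Kesten--Stigum type theorem for supercritical multitype branching processes of \cite{moy,Eng1,Eng2}. Positive probability of non-extinction follows from supercriticality, for instance via the embedded single-type Galton--Watson coupling noted above, whose mean offspring number is $\kappa_0>1$. The $R$-positivity supplies the dominant eigenvalue $\lambda^\ast$ with strictly positive left and right eigenvectors, and the theorem then yields that on the survival event the normalized empirical type distribution converges,
\[
\frac{\eta_t}{|\eta_t|}\longrightarrow\nu\qquad\text{in probability},
\]
where $\nu$ is the left eigenvector of $A$ normalized to a probability on $\Lambda$ (the $R$-invariant measure of the mean matrix). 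The only remaining verification is that the $L\log L$-type moment hypotheses of the cited limit theorem hold here, which is routine given the explicit branching mechanism.
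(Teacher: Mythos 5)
Your overall frame (realizing $(\eta_t)$ as a multitype branching process with mean matrix $A=Q+B$, $B(x,y)=\beta(x)\I\{y=1\}$, reducing to $R$-positivity of $A$, then invoking a countable-type Kesten--Stigum theorem) is exactly the paper's strategy, and your computation of the mean semigroup is correct. But the proposal has a genuine gap at what you yourself identify as the technical heart, and it is circular there. You propose to establish $R$-positivity by forming the Doob transform $\widetilde A(x,y)=h(y)A(x,y)/h(x)-\lambda^\ast\delta_{x,y}$ with $Ah=\lambda^\ast h$, and then checking positive recurrence of the resulting conservative chain. For infinite $\Lambda$, however, the existence of $\lambda^\ast$ together with a strictly positive right eigenfunction $h$ (and the summability distinguishing $R$-positive from $R$-null) is precisely what must be proved; presupposing $h$ to build the transformed chain, and then declaring the ``adequate assumptions'' to be whatever makes the analysis succeed, assumes the conclusion. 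The paper's transformation is different and avoids this: since $\beta$ is assumed bounded, one subtracts $\bar\beta=\sup_x\beta(x)$ on the diagonal, so that $\tilde A=A-\bar\beta\,\mathrm{Id}$ has nonpositive row sums and is the rates matrix of an \emph{absorbed} Markov process, with no eigenfunction needed a priori; $R$-positivity then follows from checkable hypotheses on $Q$ and $\beta$ via the Lyapunov criterion (Theorem \ref{Lyapunov}) or the Do\"eblin-type criterion (Theorem \ref{alpha.C}), both resting on the Ferrari--Kesten--Mart\'inez theorem. Those hypotheses (Proposition \ref{prop.R.positive} (1) or (2), or conditions (a)--(c) of Corollary \ref{moy.birth.death}) \emph{are} the ``adequate assumptions'' of the theorem; producing them is the content, not a detail.

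Two further steps you dismiss as routine are not. First, the cited limit theorem (Moy, Theorem \ref{thm.moy}) requires the weighted second-moment condition \eqref{condition.moy}, not an $L\log L$ hypothesis; the paper calls it delicate and verifies it using the boundedness above of the right eigenvector $\mu$ (obtained from Theorem \ref{alpha.C}, or from a comes-down-from-infinity argument in the birth-and-death case), the summability of $\nu$, and stochastic domination of $|Z_1|$ by a binary Galton--Watson process with rate $\bar\beta$. Second, the $L^2$ convergence $R^{-n}Z_n(y)\to W\nu(y)$ alone does not give \eqref{lim.proportions} on the survival event: for countably many types the dichotomy $\{W=0\}=\{\text{extinction}\}$ is not part of the limit theorem and must be established separately. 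The paper does this through the generating-function fixed-point argument of Corollary \ref{true.empirical.measure}, whose hypothesis (some type's population tends to infinity on survival) is checked by showing $Z_n(1)\to\infty$ using the Yaglom limit of $Q$; finally the discrete skeleton is upgraded to continuous time via Kingman's theorem. None of these three ingredients appears in your proposal, so as it stands it does not constitute a proof.
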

Here $\nu$ is a finite measure, which is the left eigenvector of a matrix obtained as a transformation of $Q$ (see section 3 for details).
The paper is organized as follows. In Section \ref{rpositivity} we establish two general criteria to prove $R-$positivity of nonnegative matrices. In both of them we transform the matrix to obtain a sub-Markovian operator. Next we apply a Lyapunov type criteria in the first case (Theorem \ref{Lyapunov}) and a Do\"eblin type argument in the second one, Theorem \ref{alpha.C}. In Section \ref{application} we apply these criteria to prove Theorem \ref{main.theorem}.

Similar strategies have been applied in a series of papers by N. Champagnat, D. Villemonais (\cite{champagnat-villemonaisPTRF,champagnat-villemonais, champagnat2019} among them) to establish existence of quasi-stationary distributions and uniform convergence towards them. Our techniques differ from theirs. They are probabilistic in nature and based on a theorem by Kesten, Ferrari and Mart\'inez \cite{FKM} that provides probabilistic conditions to establish $R-$positivity.

\section{Lyapunov functions, D\"oeblin conditions and $R$-positivity}
\label{rpositivity}

In this section we consider continuous or discrete time Markov chains and give conditions under which the process is $R$-positive. We will use them in Section \ref{application} to establish the asymptotic behavior of the tumor growth model. The processes in this section should not be confused with the driving process with rates $Q$ of the previous section. For that reason, in this section we use the letters $A$, $A_0$, $\tilde A$ for generators instead of $Q$.

Let $X=(X_t, t\ge 0)$ be a continuous time pure jump Markov process with rates matrix $A_0=(a(x,y), x, y \in \Lambda \cup \{0\})$. We use again the convention $a(x,x)=-\sum_{y\ne x}a(x,y)$. We assume that $X$ is absorbed at zero and denote $A$ the restriction of $A_0$ to $\Lambda$, so that $-\sum_{y\in \Lambda} a(x,y)=a(x,0)$ is the absorption rate from state $x$. For a function $V\colon \Lambda \to \R_{\ge 0}$, define the drift of $V$ at $x \in \Lambda$ by
\[
 \dot V(x) = \sum_{y \in \Lambda_0} a(x,y)V(y).
\]
We always assume $V(0)=0$ and that the sum above is well defined and finite.
\begin{theorem}[Lyapunov condition]
\label{Lyapunov}
Assume there exists $V\colon \Lambda \to \R_{\ge 0}$ with $V(x) \to \infty$ as $x\to \infty$ and such that $\lim_{x \to \infty}\frac{\dot V(x)}{V(x)} = -\infty$. Then $X$ is $R$-positive for some $0 < R \le 1$. 
\end{theorem}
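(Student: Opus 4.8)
The plan is to produce the spectral data that, by the standard dictionary, certifies $R$-positivity: a decay rate $\lambda_0\ge 0$ (so that $R=e^{-\lambda_0}\in(0,1]$), a strictly positive right eigenfunction $h$ with $Ah=-\lambda_0 h$, and a strictly positive left eigenmeasure $\mu$ with $\mu A=-\lambda_0\mu$ for which $\sum_{x\in\Lambda}\mu(x)h(x)<\infty$; it is this last finiteness that separates $R$-positivity from $R$-nullity. The point of the plan is that the hypothesis $\dot V(x)/V(x)\to-\infty$ is far stronger than a classical Foster--Lyapunov drift, and this extra strength is precisely what will force the transformed chain below to be positive, and not merely null, recurrent.

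First I would record the quantitative content of the drift: for every $c>0$ there is a finite set $K_c\subset\Lambda$ with $\dot V(x)\le -cV(x)$ for $x\notin K_c$. Applying Dynkin's formula to $e^{ct}V(X_t)$ then controls $\E_x[V(X_t)]$ by $V(x)$ up to a contribution supported on $K_c$, so $\E_x[V(X_t)]$ decays at a rate arbitrarily close to $c$ away from that finite set. Two consequences follow. The absorbed semigroup $P_t=e^{tA}$ has a well-defined decay rate $\lambda_0=-\lim_{t\to\infty}t^{-1}\log P_t(x,y)\ge 0$, independent of $x,y$ by irreducibility; and, since $V\to\infty$ while $\E_x[V(X_t)]$ stays controlled, the conditional laws of $X_t$ given $\{X_t\in\Lambda\}$ are tight, so no mass escapes to infinity.

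Next I would turn tightness into a genuine eigenpair. The finite-set control together with tightness allows the standard compactness/fixed-point argument on the cone of positive functions (or an appeal to a Champagnat--Villemonais type framework) to produce the principal value $\lambda_0$ together with a strictly positive, finite eigenfunction $h$, $Ah=-\lambda_0 h$, and a quasi-stationary measure $\mu$ with $\mu A=-\lambda_0\mu$; the Lyapunov bounds should at the same time make $h$ comparable to $V$ off a finite set and give $\mu(V)<\infty$. I would then pass to the Doob transform
\[
\tilde A f(x)=\frac{1}{h(x)}\sum_{y\in\Lambda}a(x,y)h(y)f(y)+\lambda_0 f(x),
\]
which by the eigenrelation satisfies $\tilde A\one=0$ (with $\one$ the constant function) and is therefore an honest, conservative Markov generator on $\Lambda$. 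By the classical correspondence, $R$-positivity of $A$ with $R=e^{-\lambda_0}$ is equivalent to positive recurrence of $\tilde A$: if $\tilde A$ has stationary law $\pi$, then $\mu(x)=\pi(x)/h(x)$ is the left eigenmeasure and $\sum_x\mu(x)h(x)=\sum_x\pi(x)=1<\infty$ is exactly the summability defining $R$-positivity.

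It remains to establish positive recurrence of $\tilde A$, for which I would use $W=V/h$ as a Foster--Lyapunov function. Using $V(0)=0$, a direct computation gives
\[
\tilde A W(x)=\frac{1}{h(x)}\bigl(\dot V(x)+\lambda_0 V(x)\bigr)=W(x)\Bigl(\frac{\dot V(x)}{V(x)}+\lambda_0\Bigr),
\]
and since $\dot V(x)/V(x)\to-\infty$ while $W$ is bounded below by a positive constant off a finite set (this is where $h\asymp V$ enters), the right-hand side is $\le-\ve$ off a finite set $K$ for some $\ve>0$. This is Foster's criterion and yields positive recurrence of $\tilde A$, completing the chain of implications. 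I expect the real obstacle to be the middle step: upgrading the Lyapunov supersolution inequality to a bona fide positive eigenpair $(\lambda_0,h)$ on the infinite state space $\Lambda$, with enough two-sided control of $h$ against $V$ to run the final drift estimate, and with no loss of mass at infinity in the limiting construction. Once $h$ is known to be comparable to $V$ off a finite set, the transform identity and the Foster step are essentially mechanical, and it is exactly the strength of the hypothesis $\dot V/V\to-\infty$ that makes the resulting drift strictly dissipative.
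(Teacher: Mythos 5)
Your high-level dictionary is correct: if one can exhibit a strictly positive right eigenfunction $h$ and left eigenmeasure $\mu$ of $A$ at the same eigenvalue $-\lambda_0$ with $\sum_x \mu(x)h(x)<\infty$, then the Doob transform is an honest generator with stationary law $\pi(x)=\mu(x)h(x)$, and $R$-positivity follows. But this is exactly why your argument has a genuine gap rather than a deferred technicality: the existence of such an exact eigenpair at the decay parameter, with the required finiteness, \emph{is} the content of $R$-positivity. Your proof uses it as an input (step ``turn tightness into a genuine eigenpair''), so the scheme is circular unless that step is carried out independently, and the sketch you give does not do so. On a countable, non-compact $\Lambda$ the ``standard compactness/fixed-point argument on the cone of positive functions'' faces precisely the obstacles you name and does not resolve: limits of approximate eigenfunctions can vanish, blow up, or solve the eigenproblem at the wrong eigenvalue, and mass can escape to infinity; invoking a Champagnat--Villemonais theorem instead would reduce the statement to a result at least as strong as the one being proved. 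Two subsidiary claims are also unsupported: (i) the tightness of the conditional laws does not follow from your drift bound, because that bound controls $\E_x[V(X_t)\one\{X_s\notin K_c\ \forall s\le t\}]$, whereas conditional tightness requires comparing the \emph{full} expectation $\E_x[V(X_t)]$ (whose decay rate is governed by $\lambda_0$, not by the arbitrarily large $c$) with the survival probability $\P_x(X_t\in\Lambda)$ -- this comparison is again the hard part; (ii) the two-sided comparability $h\asymp V$ off a finite set is not justified and is most likely false: under strong inward drift the right eigenvector is typically bounded above and below (as happens in Theorem \ref{alpha.C} of this paper), while $V\to\infty$. Fortunately your Foster step only needs $h\le CV$, but even that requires an argument.

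The paper's proof takes a route that avoids constructing eigenvectors altogether, and this is what makes it work: from the drift condition it derives, for any prescribed $\rho\in(0,1)$, the finite-set escape estimate
\[
\P_x\big( \tau>n;\ X_1\notin\mathscr U_1,\dots,X_n\notin\mathscr U_1\big)\le C\rho^{n-1},
\]
by telescoping the ratios $V(X_{k+1})/V(X_k)$ along paths that stay outside $\mathscr U_1$ and bounding each conditional factor by $\rho$. Since $\rho$ can be taken below the decay parameter $R$, Theorem \ref{thm:h1h2h3} (Ferrari--Kesten--Mart\'{\i}nez) then delivers $R$-positivity and summability of $\nu$ in one stroke; the eigenpair whose existence you try to build by hand is produced by that theorem, not assumed. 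If you want to salvage your Doob-transform strategy, you would need an independent construction of $(\lambda_0,h,\mu)$ -- for instance via approximation by finite truncations with uniform bounds coming from the Lyapunov function -- and that is a substantial piece of work, not a mechanical step.
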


\begin{remark}
This theorem has to be compared with \cite[Theorem 5.1]{champagnat-villemonais}. Our assumptions seem to be more restrictive than those on \cite{champagnat-villemonais}, however our proof can be easily modified to fit those assumptions. We prefer to write it in this way since this is a condition that can actually be checked in practice. Also, although not exactly the same, the conclusions are similar (but \cite{champagnat-villemonais} is more general and includes and exponential convergence to quasistationarity statement). We decided to include our proof since it is short and simple. It is based on ideas and the following theorem from \cite{FKM}. 
\end{remark}

\begin{theorem}
[{\cite[Theorem~1]{FKM}}]
\label{thm:h1h2h3}
Assume $P=\exp(A)$ is irreducible and call $R$ its decay parameter. Let $\tau=\inf\{ n \in \N \colon X_n=0\}$. Suppose that there exist a finite set $\mathscr U_1 \subset \Lambda$, a state $x'\in \mathscr U_1$, $\rho<R$, and a positive constant~$\kappa$ 
such that for all $x\in \mathscr U_1$ and $n\ge 0$, $\P_x(\tau>n;\, X_1 \notin \mathscr U_1 ,
\dots,X_{n} \notin \mathscr U_1 ) \le \kappa\rho^{n}$;
Then $P$ is $R$-positive and its left eigenvector~$\nu$ is summable.
\end{theorem}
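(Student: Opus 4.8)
\emph{Proof idea.} The plan is to collapse the infinite matrix $P$ onto the finite set $\mathscr U_1$ by a first‑return (renewal) decomposition, and then to read off $R$‑positivity from the analytic structure of the resulting \emph{finite} generating matrix. Throughout I take $R\in(0,1]$ to be the decay parameter, so that by irreducibility the Green's functions $P_{xy}(z):=\sum_{n\ge 0}P^n(x,y)\,z^n$ all have the common radius of convergence $1/R$, and $z=1/R$ is a singular point of each of them by Pringsheim's theorem. The first move is to introduce, for $x,y\in\mathscr U_1$, the first‑return generating function $F_{xy}(z)=\sum_{n\ge 1}z^n\,\P_x(X_n=y;\,X_1,\dots,X_{n-1}\notin\mathscr U_1)$, and to record the finite matrix identity $\big(P_{xy}(z)\big)_{x,y\in\mathscr U_1}=(I-F(z))^{-1}$, valid for $0\le z<1/R$, obtained by decomposing a path between two points of $\mathscr U_1$ at its successive visits to $\mathscr U_1$.

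Next I would use the hypothesis to push $F$ analytically past the critical radius. Summing the first‑return mass at time $n$ over $y\in\mathscr U_1$ gives a bound by $\P_x(X_1,\dots,X_{n-1}\notin\mathscr U_1;\,\tau>n-1)\le\kappa\rho^{\,n-1}$, so every $F_{xy}$ is analytic on $|z|<1/\rho$. Since $\rho<R$ we have $1/\rho>1/R$, hence $F$ is finite and analytic in a neighbourhood of $z=1/R$. Now let $\phi(z)$ be the Perron eigenvalue of the finite, irreducible, nonnegative matrix $F(z)$; it is real‑analytic and strictly increasing on $(0,1/\rho)$. For $z<1/R$ the right‑hand side of the renewal identity is finite, so $\phi(z)<1$; and $\phi(1/R)<1$ is impossible, since then $(I-F(z))^{-1}$ would be analytic at $z=1/R$, contradicting that $1/R$ is a singularity of $P_{xy}$. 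Therefore $\phi(1/R)=1$, and strict monotonicity forces $\phi'(1/R)>0$, so $\det(I-F(z))$ has a \emph{simple} zero and $(I-F(z))^{-1}$ a simple pole at $z=1/R$, with rank‑one residue built from the positive left/right Perron eigenvectors $u,v$ of $F(1/R)$.

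The final step is to transfer this finite‑set pole structure back to $P$ and to establish summability. I would extend $u,v$ to $\Lambda$ by excursion decompositions, setting $h(x)=\sum_{y\in\mathscr U_1}E_{xy}(1/R)\,v(y)$ with $E$ the first‑entrance‑to‑$\mathscr U_1$ generating function, and $\nu(y)=\sum_{x\in\mathscr U_1}u(x)\,N_{xy}(1/R)$ with $N_{xy}(z)=\sum_{n\ge 0}z^n\,\P_x(X_n=y;\,X_1,\dots,X_{n-1}\notin\mathscr U_1)$. A one‑step decomposition together with $F(1/R)v=v$ and $uF(1/R)=u$ yields $Ph=Rh$ and $\nu P=R\nu$, with $h,\nu>0$ by irreducibility. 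Because $P=\exp(A)$ has positive diagonal, $P$ is aperiodic, so $z=1/R$ is the unique dominant singularity; subtracting the simple pole then gives $R^{-n}P^n(x,y)\to h(x)\nu(y)/\langle\nu,h\rangle$ with a finite, strictly positive limit, which is exactly $R$‑positivity. Summability of $\nu$ follows directly from $\rho<R$, since
$$\sum_{y\notin\mathscr U_1}N_{xy}(1/R)=\sum_{n\ge 0}R^{-n}\,\P_x(X_1,\dots,X_n\notin\mathscr U_1;\,\tau>n)\le\kappa\sum_{n\ge 0}(\rho/R)^n<\infty,$$
whence $\sum_y\nu(y)<\infty$.

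The hard part is the passage in the third paragraph: pinning the singularity of the infinite‑dimensional Green's function exactly to the crossing $\phi(1/R)=1$ and showing it is a genuine simple pole rather than an essential singularity or branch point. This is precisely what the analyticity gained past $1/R$ buys, and it is the mechanism that upgrades mere $R$‑recurrence to $R$‑positivity. It is worth emphasizing that the strict inequality $\rho<R$ is used twice and is essential both times: once to continue $F$ analytically beyond the critical radius, and once to make the geometric series $\sum(\rho/R)^n$ converge, which is what delivers the summability of the left eigenvector $\nu$.
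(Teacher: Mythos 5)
First, a point of comparison: the paper itself contains no proof of this statement --- it is imported verbatim from Ferrari--Kesten--Mart\'inez \cite{FKM} (see the remark following it), so your attempt can only be measured against the original renewal-theoretic argument, and in substance that is exactly what you have reconstructed. The first-return identity $(P_{xy}(z))_{x,y\in\mathscr U_1}=(I-F(z))^{-1}$, the observation that the hypothesis pushes the radius of convergence of $F$ out to $1/\rho>1/R$, the Pringsheim argument forcing the Perron root to satisfy $\phi(1/R)=1$, the first-entrance/taboo construction of the eigenvectors, and the geometric bound $\sum_{y\notin\mathscr U_1}N_{xy}(1/R)\le\kappa\sum_{n\ge 0}(\rho/R)^n$ for summability of $\nu$ are all correct and are the right ingredients. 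Two local slips: as written, your $N_{xy}$ taboos $\mathscr U_1$ only at times $1,\dots,n-1$, so for $y\in\mathscr U_1$ it equals $\delta_{xy}+F_{xy}(z)$, giving $\nu=2u$ on $\mathscr U_1$ and breaking the identity $\nu P=R\nu$ there; the taboo must run over times $1,\dots,n$, so that $N=I$ on $\mathscr U_1$, after which the one-step decomposition does yield $\nu P=R\nu$. Also, ``strict monotonicity forces $\phi'(1/R)>0$'' is not a valid inference for an increasing analytic function; what works is the Perron derivative formula $\phi'(z)=uF'(z)v/\langle u,v\rangle>0$, using $u,v>0$ and that $F'$ has a positive entry.

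The one substantive gap is the final transfer step. You claim that aperiodicity makes $1/R$ the unique dominant singularity and that ``subtracting the simple pole'' yields $R^{-n}P^n(x,y)\to h(x)\nu(y)/\langle\nu,h\rangle$ for \emph{all} $x,y\in\Lambda$. But the hypothesis controls excursions only from $x\in\mathscr U_1$; for $x\notin\mathscr U_1$ you have no analytic control of $E_{xu}(z)$ at or beyond $1/R$ (even finiteness of $E_{xu}(1/R)$, needed to define $h$, requires the Vere-Jones fact $F_{xu}(1/R)<\infty$, which you do not prove), and coefficient asymptotics require control of the resolvent on the whole circle $|z|=1/R$, which you do not establish. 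Fortunately much less is needed, and the repair stays inside $\mathscr U_1$: either (i) show $\det(I-F(z))\ne 0$ for $|z|=1/R$, $z\ne 1/R$ (strict triangle inequality in each entry plus Wielandt domination, using that $P=\exp(A)$ has all entries positive), so that each $P_{x'y'}(z)$ with $x',y'\in\mathscr U_1$ is meromorphic on a disc of radius larger than $1/R$ with a unique simple pole at $1/R$, whence $\lim R^{-n}P^n(x',x')>0$, which by solidarity is $R$-positivity; or (ii), cleaner, use the Schur complement identity $1-f_{x'x'}(z)=\det(I-F(z))/\det(I-F_{BB}(z))$ with $B=\mathscr U_1\setminus\{x'\}$, valid near $1/R$ since the strict submatrix $F_{BB}(1/R)$ has spectral radius $<\phi(1/R)=1$; your simple-zero statement then gives $f_{x'x'}(1/R)=1$ and $0<f'_{x'x'}(1/R)<\infty$, which is precisely Vere-Jones's criterion for $R$-positivity at $x'$. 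With that repair, and uniqueness of the $R$-invariant measure in the $R$-recurrent case (so your summable $\nu$ is \emph{the} left eigenvector), the proof closes and is essentially the argument of \cite{FKM}.
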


\begin{remark}
The statement of \cite[Theorem 1]{FKM} is more involved since their subset $\mathscr U_1$ can be infinite. In our case, conditions (1.23) and (1.24) in \cite{FKM} follow immediately from irreducibility of $A$. The hypotheses of the theorem above correspond to what they call (1.22). In our statement $R$ is what they call $1/R$. The main ingredient of the proof of Theorem \ref{Lyapunov} is to show that this condition is in fact verified.
\end{remark}

\begin{proof}[Proof of Theorem \ref{Lyapunov}]
Let $X=(X_t, t\ge 0)$ be a process with a generator $A$. Given $x \in \Lambda$ define $g_x(t)=\E_x(V(X_t)\one\{X_t\notin \mathscr U_1\})$.  We have for $x \notin \mathscr U_1$
\[
 \frac{d}{dt}g_x(t) \le \E_x  \dot V(X_t)\one\{X_t \notin \mathscr U_1\}.
\]
Given $0<\rho<1$, we can choose $\mathscr U_1$ such that $-\dot V(x)/V(x) > -\log \rho$ if $x \notin \mathscr U_1$. Then, for such $\mathscr U_1$ we have
\[
\frac{d}{dt}g_x(t) \le (\log \rho)  \E_x  V(X_t)\one\{X_t \notin \mathscr U_1\} = (\log \rho)g_x(t).
\]
This and $g_x(0)=V(x)$ gives us $g_x(t)\le V(x) \rho^t$ and in particular $\E_x(V(X_1))/V(x) \le \rho$ for any $x\notin \mathscr U_1$ and also $\sup_x \E_x(V(X_1))/V(x) \le C$ for every $x\in \Lambda$. Given $\tilde{K}$, we can enlarge $\mathscr U_1$ if necessary to ensure that $x \notin \mathscr U_1$ implies $V(x) \ge \tilde K$. Then we have for any $x$ with $V(x)\le K$,

\begin{align*}
\P_x & \big(  X_1 \in \mathcal U_1^c  ,\dots,  X_n \in \mathcal U_1^c\big)
 \le
\frac{1}{\tilde K}\E_x\left[V(X_n) \one\{X_1 \in \mathcal U_1^c,\dots,
X_n \in \mathcal U_1^c\}\right]
\\
&
=
\frac{V(x)}{\tilde K}
\E_x\left[ \frac{V(X_1)}{V(x)}\cdots \frac{V(X_{n-1})}{V(X_{n-2})} \frac{V(X_n)}{V(X_{n-1})} \one\{X_1 \in \mathcal U_1^c,\dots, X_n \in \mathcal U_1^c\} \right ]
\\
&
\le
\E_x\left[ \frac{V(X_1)}{V(x)}\cdots \frac{V(X_{n-1})}{V(X_{n-2})} \frac{V(X_n)}{V(X_{n-1})} \one\{X_1 \in \mathcal U_1^c,\dots, X_n \in \mathcal U_1^c\} \right ]\\
&
=
\E_x \left\{ \E\left[ \frac{V(X_1)}{V(x)}\cdots \frac{V(X_{n-1})}{V(X_{n-2})} \frac{V(X_n)}{V(X_{n-1})} \one\{X_1 \in \mathcal U_1^c,\dots, X_n \in \mathcal U_1^c\} \ \Bigg|\  X_1,\dots,X_{n-1} \right ] \right\}
\\
&
\le
\E_x\left[ \frac{V(X_1)}{V(x)}\cdots \frac{V(X_{n-1})}{V(X_{n-2})} \one\{ X_1 \in \mathcal U_1^c,\dots, X_n \in \mathcal U_1^c\} \right ]
\cdot
{ \sup_{x' \in \mathcal U_1^c} \E \big( \textstyle \frac{V(X_n)}{V(X_{n-1})} \big| X_{n-1}=x' \big) }
\\
&
\le
\rho \cdot \E_x	\left[ \frac{V(X_1)}{V(x)}\cdots \frac{V(X_{n-1})}{V(X_{n-2})} \one\{ X_1 \in \mathcal U_1^c,\dots, X_n \in \mathcal U_1^c\} \right ]
\\
& \le
\cdots
\le
\rho^{n-1}
\cdot
\E_x \left[ \frac{V(X_1)}{V(x)} \right]
\le
\rho^{n-1}
\cdot
\sup_{x\in\Lambda} \E_{x} \left [\frac{V(X_1)}{V(x)} \right]
\le C \rho^{n-1}.
\end{align*}
By means of Theorem \ref{thm:h1h2h3}, we get that discrete time chain $(X_n)$ is $R-$ positive, which is equivalent to the $R-$positivity of its transition probability matrix ${\rm exp}(A)$.  Finally, observe that there is a bijection between the eigenvalues of $A$ and ${\rm exp}(A)$ and that the eigenvectors are exactly the same. This gives us the $R-$positivity of $A$, or equivalently, the process $(X_t)_{t\ge0}$.
\end{proof}
Next we give a different condition that guarantees $R$-positivity. It also has to be compared with \cite[Remark 11]{champagnat-villemonais} and \cite[Theorem 1.4]{FM}.
Define 
\begin{equation*}
a(z)= \inf_{x\in \Lambda\setminus\{z\}} a(x,z), \quad \alpha=\sum_{z\in\Lambda}a(z), \quad C=\sup_{x\in \Lambda} a(x,0). 
\end{equation*}

\begin{theorem}
\label{alpha.C}
If $\alpha>C$, then $A$ is $R$-positive. Moreover, the right eigenvector $\mu$ verifies $ 0< c_1 \le \mu(x) \le C_1$ for some positive constants $c_1$ and $C_1$ and all $x\in \Lambda$. As a consequence, $\nu$ is summable.
\end{theorem}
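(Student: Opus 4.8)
The plan is to realise $A$ as a perturbation of a uniformly killed generator whose survival probabilities are controlled \emph{independently of the starting state}; this state-independence is exactly the Do\"eblin flavour carried by the hypothesis $\alpha>C$. Let $\Pi$ be the rank-one kernel with all rows equal to the incoming-rate profile, $\Pi(x,y)=a(y)$, so that $\sum_{y\in\Lambda}\Pi(x,y)=\alpha$, and set $B:=A-\Pi$. Since $a(x,y)\ge a(y)$ for $x\ne y$, the matrix $B$ has nonnegative off-diagonal entries, while its row sums are $\sum_{y\in\Lambda}a(x,y)-\alpha=-(a(x,0)+\alpha)$. Thus $B$ is the (sub-Markovian) generator of the process subjected to an \emph{additional uniform killing at rate $\alpha$}, whose total killing rate $a(x,0)+\alpha$ lies in $[\alpha,\,C+\alpha]$. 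The decisive consequence is a two-sided, state-independent survival bound, which follows from the maximum principle applied to the Kolmogorov equations: $e^{-(C+\alpha)t}\le (e^{tB}\mathbf 1)(x)\le e^{-\alpha t}$ for all $x\in\Lambda$, $t\ge 0$.

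Next I would construct the eigenpair by a resolvent computation. For $\lambda>-\alpha$ put $G_\lambda:=(\lambda I-B)^{-1}\mathbf 1=\int_0^\infty e^{-\lambda t}e^{tB}\mathbf 1\,dt$, so that the survival bounds give the uniform estimate $\tfrac1{\lambda+C+\alpha}\le G_\lambda(x)\le \tfrac1{\lambda+\alpha}$. Using $(\lambda I-B)G_\lambda=\mathbf 1$ and $\Pi G_\lambda=\Phi(\lambda)\mathbf 1$ with $\Phi(\lambda):=\sum_{z\in\Lambda}a(z)\,G_\lambda(z)$, one finds $A\,G_\lambda=(B+\Pi)G_\lambda=\lambda G_\lambda-\mathbf 1+\Phi(\lambda)\mathbf 1$, so $G_\lambda$ is a genuine right eigenvector of $A$ with eigenvalue $\lambda$ exactly when $\Phi(\lambda)=1$. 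Since $\tfrac{\alpha}{\lambda+C+\alpha}\le\Phi(\lambda)\le\tfrac{\alpha}{\lambda+\alpha}$ and $\Phi$ is continuous and strictly decreasing, we get $\Phi(-C)\ge 1\ge \Phi(0)$, where $\alpha>C$ is used to keep $-C$ inside the domain $(-\alpha,\infty)$; hence there is a unique $\lambda^*\in[-C,0]$ with $\Phi(\lambda^*)=1$. Taking $\mu:=G_{\lambda^*}$ then gives $A\mu=\lambda^*\mu$ together with the asserted bounds $c_1:=\tfrac1{C+\alpha}\le\mu(x)\le\tfrac1{\alpha-C}=:C_1$, where $C_1<\infty$ again precisely because $\alpha>C$.

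The transposed resolvent identity produces the left eigenvector: setting $\nu:=a(\cdot)\,(\lambda^* I-B)^{-1}$ one checks $\nu A=\nu B+\nu\Pi=(\lambda^*\nu-a(\cdot))+a(\cdot)=\lambda^*\nu$, while $\sum_{y}\nu(y)=\sum_z a(z)\,G_{\lambda^*}(z)=\Phi(\lambda^*)=1$, so $\nu$ is summable. To upgrade ``positive eigenvectors'' to genuine $R$-positivity I would pass to the Doob $h$-transform $\hat A:=\mu^{-1}(A-\lambda^* I)\mu$, which is a conservative (honest) Markov generator because $\mu$ is exactly $\lambda^*$-harmonic, and observe that $\hat\pi:=\mu\,\nu$ satisfies $\sum_x\hat\pi(x)\hat A(x,y)=\mu(y)\big((\nu A)(y)-\lambda^*\nu(y)\big)=0$, i.e.\ $\hat\pi$ is an invariant measure for $\hat A$ of finite total mass $\sum_x\mu(x)\nu(x)\le C_1\|\nu\|_1<\infty$. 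An irreducible Markov process carrying a finite invariant measure is positive recurrent, and positive recurrence of the $h$-transformed process is equivalent to $R$-positivity of $A$ with $R=e^{-\lambda^*}$, since the transform multiplies the return kernel $P_n(x,x)$ by exactly $R^{n}$. This simultaneously delivers $R$-positivity, the two-sided bounds on $\mu$, and the summability of $\nu$.

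The main obstacle is analytic rather than structural. Because the total jump rates $-a(x,x)$ need not be bounded, one must justify working with the minimal sub-Markovian semigroup $e^{tB}$ and its resolvent (well-posedness, the uniform survival bounds via the maximum principle, and non-explosion of the $h$-transformed chain so that the finite-invariant-measure criterion truly yields positive recurrence). The second delicate point, which the $h$-transform argument is designed to settle, is the identification of the constructed $\lambda^*$ with the actual decay parameter: exhibiting a positive eigenpair alone does not force $R$-\emph{positivity} (it can coexist with $R$-transience), and it is the finiteness of $\sum_x\mu(x)\nu(x)$—guaranteed here by $\mu\le C_1$ and $\nu$ summable—that distinguishes the positive case.
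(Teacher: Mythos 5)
Your construction takes a genuinely different route from the paper's (which verifies the Ferrari--Kesten--Mart\'inez hitting-time criterion for the skeleton $\exp(A)$, using Kingman's bound $\gamma\le C$ and $\P_x(\tau_{\mathscr U_1}\wedge\tau_0>n)\le e^{-\alpha n}$, and only afterwards reads off the eigenvector bounds from Seneta--Vere-Jones limits). Your algebraic core --- the splitting $A=B+\Pi$, the resolvent construction $\mu=G_{\lambda^*}$, $\nu=a(\cdot)(\lambda^* I-B)^{-1}$, and the two-sided bounds forced by $\alpha>C$ --- is sound and would be a complete proof in discrete time or for bounded rates. But in the setting of the theorem (countable state space, unbounded rates) the decisive step is exactly the one you leave open, and it is a genuine gap, not a routine verification. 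The assertion ``an irreducible Markov process carrying a finite invariant measure is positive recurrent'' is false in continuous time without non-explosion: a finite measure with $\hat\pi\hat A=0$ need not be invariant for the minimal semigroup when the $\hat A$-chain can explode, and then no recurrence conclusion follows. Equivalently, your $h$-transform needs the semigroup identity $e^{tA}\mu=e^{\lambda^* t}\mu$, whereas the resolvent construction only yields the generator identity $A\mu=\lambda^*\mu$; with unbounded rates, minimality gives only $e^{tA}\mu\le e^{\lambda^* t}\mu$, possibly strictly, in which case $\hat A$ is explosive, $\hat\pi$ is not semigroup-invariant, and $\lambda^*$ need not be the decay parameter. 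Since distinguishing $R$-positivity from $R$-transience is the whole content of the theorem, deferring this to ``the main obstacle is analytic'' leaves the proof incomplete. (The gap is closable, e.g.\ by a renewal-equation argument for $\phi(t)=\sum_x a(x)(e^{tA}\mu)(x)$, exploiting that $e^{-\lambda^* u}\sum_x a(x)(e^{uB}\mathbf 1)(x)\,du$ is a probability measure --- essentially the Ferrari--Mari\'c route --- but some such argument must actually be supplied.)

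A second, smaller inaccuracy feeds into the same issue: $B=A-\Pi$ is not ``the process subjected to additional uniform killing at rate $\alpha$'' (that would be $A-\alpha I$); it is the residual chain with reduced jump rates $a(x,y)-a(y)$, killed at total rate $a(x,0)+\alpha$. Hence your lower bound $(e^{tB}\mathbf 1)(x)\ge e^{-(C+\alpha)t}$ is not a pure maximum-principle statement: explosion is an extra way for the minimal semigroup to lose mass, so the bound requires non-explosion of this residual chain. This one is fixable from the paper's standing (implicit) assumption that the $A$-process is a non-explosive pure-jump process: if the residual chain exploded with positive probability before an independent exponential-$\alpha$ clock, the $A$-process itself would explode with positive probability. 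The paper's proof avoids all of these subtleties because it never constructs eigenvectors by hand; it works directly with the honest minimal skeleton and lets the FKM theorem produce $R$-positivity, which is why it needs no upgrade from generator identities to semigroup identities.
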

\begin{remark}
Under this condition, Ferrari and Mari\'c \cite{FM} proved existence of a quasi-stationary distribution (QSD) and Jacka and Roberts uniqueness \cite{jacka}. Here we prove $R$-positivity, which also leads to existence of such a QSD.
\end{remark}

\begin{proof}
The argument is reminiscent to the one in \cite[Theorem 4.1]{champagnat-villemonaisPTRF}, although applied in a different situation. Let $(X_t, t\ge 0)$ be a Markov process with transition rates $A$. By \cite[Theorem 1]{Kingman}, the parameter $\gamma$ defined by
\[
\lim_{t\to \infty} -t^{-1}\log \P_x(X_t=y) = \gamma, 
\]
is well defined, independent of $x,y$ and verifies for every $x\in \Lambda,\, t\ge0$,
\[
 -t^{-1}\log \P_x(X_t=x) \ge \gamma.
\]
Observe that for every $x\in \Lambda$ we have $\P_x(X_t\ne0) \ge e^{-Ct}$. Hence
\[
 \gamma \le  -t^{-1}\log \P_x(X_t=x) \le -t^{-1}\log \P_x(X_t \ne 0) \le C.
\]
Without loss of generality, we can assume that there exists a finite set $\mathscr U_1\subset \Lambda$ such that  
$\sum_{y\in \mathscr U_1}\inf_{x\in\Lambda} a(x,y)=\alpha>C.$ Then, for $R=e^{-\gamma}$, we can take $\rho>e^{-\alpha}$ such that 
$$\rho = e^{-\alpha} < e^{-\gamma} = R.$$
Let us check that $P:=\exp(A)$ verifies the hypothesis of Theorem \ref{thm:h1h2h3}. Observe that since $\P_x( \tau_{ \mathscr U_1}>n)=\P_x(X_k\notin \mathscr U_1, \mbox{for } k\in\{1,2,\ldots, n\})\leq e^{-\alpha n} $, we have
$$
\P_x(\tau_{\mathscr U_1}>n, \tau_0>n)\leq \P_x(\tau_{\mathscr U_1}>n)\leq e^{-\alpha n} = \rho^n.
$$
Then, $P$ is $R$-positive and there exist left and right eigenvectors $\nu$, $\mu$ respectively with eigenvalue $1/R$. By \cite{jacka}, $\nu$ is the only QSD of $P$. Also, $\sum_{x\in \Lambda}\nu(x)<\infty$ and by \cite[Theorem 3.1]{seneta-vere-jones-66} the following limits hold  
$$\lim_{n\to\infty}\P_x(X_n=y|\tau_0>n)=\nu(y), \qquad \lim_{n\to\infty}R^n\P_x(\tau_0>n)=\mu(x). $$
Next, we compute the bounds for $\mu$. Notice that 
 \begin{align*}
 \P_x(\tau_0 >n) & =   \P_x(\tau_0>n|\tau_{\mathscr U_1}>n)\P_x(\tau_{\mathscr U_1}>n) + \P_x(\tau_0 >n|\tau_{\mathscr U_1}\le n) \P_x(\tau_{\mathscr U_1}\le n)\\ 
               & \le   \P_x(\tau_{\mathscr U_1}>n) + \E_x(\P_{X_{{\tau_{\mathscr U_1}}}}(\tau_0 > n- \tau_{\mathscr U_1})\mathbf{1}\{\tau_{\mathscr U_1}\le n\})\\
               & \le e^{-\alpha n} + \E_x ( g(n-\tau_{\mathscr U_1})),
\end{align*}
where  $g(k)=\max_{y\in \mathscr U_1} \P_y(\tau_0 > k)$. Then, 
\begin{align*}
\mu(x)=\lim_{n\to \infty} R^{-n} \P_x(\tau_0 >n) & \le \limsup_n \left[ R^{-n}e^{-\alpha n} + \E_x( R^{-n}g(n-\tau_{\mathscr U_1}))\right]\\
 & \le  \limsup_n \E_x \left( R^{-(n-\tau_{\mathscr U_1})} g(n-\tau_{\mathscr U_1}) R^{-\tau_{\mathscr U_1}} \right)\\
 & \le  \left (\sup_n R^{-n} g(n)\right ) \E_x R^{-\tau_{\mathscr U_1}}=:C_1.
\end{align*}
The first factor is finite since $R^{-n}g(n)$ has a limit which does not depend on $x$. To bound the second factor, observe that since  
$\P_x(\tau_{\mathscr U_1}>n) \le e^{-\alpha n}=\rho^{n}$
for all  $n \ge 1$, we have  
$$\E_x( R^{-\tau_{\mathscr U_1}})=\sum_{n\in\N}R^{-n}\P(\tau_{\mathscr U_1}=n)\le \sum_{n\in\N}R^{-n}\P(\tau_{\mathscr U_1}>n-1)=\frac{1}{R}\sum_{n\in\N_0}R^{-n}\rho^{n} <\infty.$$
For the lower bound we compute,
\begin{align*}
 \mu(x)  & = \gamma\sum_{y\in\Lambda} a({x,y})\mu(y) \ge \gamma\sum_{y\in\Lambda} \inf_{x\neq y} a(x,y)\mu(y)  \ge \gamma\sum_{y\in\mathscr U_1} \inf_{x \neq y } a({x,y})\mu(y)\\
 & \geq 
 \gamma\alpha \min_{y \in\mathscr U_1} \mu(y) 
 =: c_1 >0.
\end{align*}
\end{proof}

\section{Tumor growth and multitype branching}
\label{application}
In this section we will address the asymptotic behavior of $(\eta_t)$ as time goes to infinity in the supercritical regime. Our proofs will strongly rely on a theorem by Moy (a version of Kesten-Stigum theorem for countable types) for the behavior of supercritical multitype branching processes.
We state the theorem for completeness. We will slightly abuse notation by using $\E_y$ and $\P_y$ to denote that the process $Z$ starts with one particle at $y$ at $n=0$. We use $|Z_n| $ for the total number of particles in the sytem at time $n$, that is
 $|Z_n| = \sum_{y \in \Lambda} Z_n (y)$.

\begin{theorem}[{\cite[Theorem 1]{moy}}]\label{thm.moy} Let $(Z_n)_{n\ge 0}$ be a multitype branching process with a countable types space $\Lambda$ and mean matrix $M$ with entries $m(x,y)=\E_x(Z_1(y))$, $x,y \in \Lambda$. Assume $M$ is aperiodic, irreducible and $R$-positive with normalized left and right eigenvectors $\nu,\, \mu$ respectively, $R>1$ (supercritical) and
\begin{equation}
\label{condition.moy}
   \sum_{y\in \Lambda}\E_y((\sum_{x\in \Lambda}Z_1(x) \mu(x))^2)\nu(y)<\infty.
\end{equation}
Then there is a real valued random variable $W$, with $\E(W^2)<\infty$ such that for every $f$ with $\sup_x f(x)/\mu(x) <\infty$ we have
\[
\lim_{n\to \infty}\E_x\left[ R^{-n}\sum_{y\in \Lambda}Z_n(y)f(y) - W\sum_{y\in\Lambda}\nu(y)f(y)\right]^2=0.
\]
In particular, for every $y\in \Lambda$, $\E_x[R^{-n}Z_n(y) - W\nu(y)]^2 \to 0$ and if $\inf_{x\in\Lambda}\mu(x)>0$, $\E_x[R^{-n}|Z_n| - W]^2 \to 0$.
\end{theorem}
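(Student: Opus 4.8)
The plan is to reduce the whole statement to a single nonnegative martingale built from the right eigenvector $\mu$ and to control both it and the more general linear functionals by a second-moment computation fed by the hypothesis \eqref{condition.moy}. First I would introduce $W_n:=R^{-n}\sum_{y\in\Lambda}Z_n(y)\mu(y)$ and check that $(W_n)_{n\ge0}$ is a nonnegative martingale for $\F_n=\sigma(Z_0,\dots,Z_n)$. This is immediate from the branching property and the eigenrelation $\sum_y m(x,y)\mu(y)=R\mu(x)$: conditionally on $\F_n$ the particles reproduce independently, so $\E[\sum_y Z_{n+1}(y)\mu(y)\mid\F_n]=\sum_x Z_n(x)\sum_y m(x,y)\mu(y)=R\sum_x Z_n(x)\mu(x)$. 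The heart of this stage is an $L^2$ bound. Writing $s(x):=\E_x[(\sum_y Z_1(y)\mu(y))^2]$ and decomposing the variance of $\sum_y Z_{n+1}(y)\mu(y)$ across the independent offspring of generation $n$ gives a recursion of the form $\E_x[(\sum_y Z_{n+1}(y)\mu(y))^2]=R^2\E_x[(\sum_y Z_n(y)\mu(y))^2]+\sum_z m^{(n)}(x,z)\big(s(z)-R^2\mu(z)^2\big)$, where $m^{(n)}$ denotes the entries of $M^n$. Dividing by $R^{2(n+1)}$ and telescoping, $\E_x[W_n^2]$ is controlled by $\sum_{k\ge0}R^{-2(k+1)}\sum_z m^{(k)}(x,z)s(z)$. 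Here I invoke the defining property of $R$-positivity, $R^{-k}m^{(k)}(x,z)\to\mu(x)\nu(z)$, so that $R^{-k}\sum_z m^{(k)}(x,z)s(z)\to\mu(x)\sum_z\nu(z)s(z)$, the last sum being finite precisely by \eqref{condition.moy}; since $R>1$ the geometric factor makes the series converge, whence $\sup_n\E_x[W_n^2]<\infty$, and Doob's theorem yields $W_n\to W$ almost surely and in $L^2$ with $\E[W^2]<\infty$.

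Second, for general $f$ with $c:=\sup_x f(x)/\mu(x)<\infty$ I would set $U_n:=R^{-n}\sum_y Z_n(y)f(y)$ and prove $\E_x[(U_n-(\sum_y\nu(y)f(y))W_n)^2]\to0$; combined with $W_n\to W$ in $L^2$ this is exactly the theorem. The natural tool is the bilinear second-moment form $B_n(f,g):=\E_x[(\sum_y Z_n(y)f(y))(\sum_y Z_n(y)g(y))]$, which obeys the recursion above with $s$ replaced by the covariance kernel $\sigma_{f,g}(z):=\mathrm{Cov}_z(\sum_y Z_1(y)f(y),\sum_y Z_1(y)g(y))$. I would compute the limits of $R^{-2n}B_n(f,f)$, $R^{-2n}B_n(f,\mu)$ and $R^{-2n}B_n(\mu,\mu)$ and verify that the quadratic combination $\E_x[U_n^2]-2(\sum_y\nu(y)f(y))\E_x[U_nW_n]+(\sum_y\nu(y)f(y))^2\E_x[W_n^2]$ tends to $0$. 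The computation is clean because $\mu$ is an exact eigenvector, $M^k\mu=R^k\mu$, which makes every factor involving $\mu$ explicit; the remaining factors are handled by the same $R$-positivity limit together with the domination $|f|\le c\,\mu$ (hence $|M^kf|\le c\,R^k\mu$). An equivalent, perhaps more transparent, way to identify the limit once $L^2$-convergence of $U_n$ to some $L_f$ is established is the tower property: for $n\ge m$ one has $\E_x[L_f\mid\F_m]=\lim_n R^{-n}\sum_z Z_m(z)\sum_w m^{(n-m)}(z,w)f(w)=(\sum_y\nu(y)f(y))W_m$, and letting $m\to\infty$ gives $L_f=(\sum_y\nu(y)f(y))W$.

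I expect the main obstacle to be this second step, and specifically the passage to the limit inside the infinite sums over the countable type space $\Lambda$. The $R$-positivity hypothesis supplies only the entrywise convergence $R^{-k}m^{(k)}(x,z)\to\mu(x)\nu(z)$ with no uniform rate, so the double sums $R^{-2n}\sum_{k=0}^{n-1}\sum_z m^{(n-1-k)}(x,z)\,\sigma_{M^kf,M^kg}(z)$ must be shown to converge rather than merely to stay bounded; this is where the geometric gain from $R>1$, the exact eigenrelation for $\mu$, the pointwise domination $|f|\le c\,\mu$, and the integrability \eqref{condition.moy} must be combined into a dominated-convergence argument. Once the $L^2$ statement is in hand, the two displayed particular cases follow by specialization: taking $f=\I_{\{y\}}$ (legitimate since $\mu>0$) gives $\E_x[R^{-n}Z_n(y)-W\nu(y)]^2\to0$, and taking $f\equiv1$, legitimate precisely when $\inf_x\mu(x)>0$, gives $\E_x[R^{-n}|Z_n|-W]^2\to0$.
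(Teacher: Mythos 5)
The paper itself gives no proof of this theorem: it is imported verbatim from Moy \cite{moy} ("We state the theorem for completeness"), so your proposal can only be compared with the classical argument. Your skeleton is the right one, and it is the standard route for Kesten--Stigum type results: the nonnegative martingale $W_n=R^{-n}\sum_y Z_n(y)\mu(y)$, the one-step variance decomposition across independent offspring giving the recursion $\E_x[(\sum_y Z_{n+1}(y)\mu(y))^2]=R^2\E_x[(\sum_y Z_n(y)\mu(y))^2]+\sum_z m^{(n)}(x,z)(s(z)-R^2\mu(z)^2)$, the geometric gain from $R>1$, and then bilinear second moments for general $f$ with $|f|\le c\,\mu$. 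However, there is a genuine gap, and it occurs already in your first step, which you present as complete: the inference from entrywise convergence $R^{-k}m^{(k)}(x,z)\to\mu(x)\nu(z)$ to convergence (or even boundedness in $k$) of the weighted sums $R^{-k}\sum_z m^{(k)}(x,z)s(z)$ is not licensed by $R$-positivity. The kernel $s(z)\ge R^2\mu(z)^2$ is in general not dominated by any multiple of $\mu(z)$, no summable dominating function is exhibited, mass can escape to infinity in the sum over $z$, and a priori it is not even clear that $\sum_z m^{(k)}(x,z)s(z)<\infty$ for each $k$. Since boundedness in $k$ of $R^{-k}\sum_z m^{(k)}(x,z)s(z)$ is exactly what your telescoped series needs, this is the crux of the proof, not a technical afterthought; you flag the analogous issue in step two but then defer to an unspecified "dominated-convergence argument", which is precisely the part that must be supplied.

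The missing idea --- and the reason \eqref{condition.moy} is phrased as a $\nu$-average --- is to exploit the \emph{exact} left-eigenvector identity $\sum_{x'}\nu(x')m^{(k)}(x',z)=R^k\nu(z)$. Since all entries are nonnegative and irreducibility forces $\nu(x)>0$ for every $x$, this identity gives the uniform-in-$k$ pointwise domination $R^{-k}m^{(k)}(x,z)\le \nu(z)/\nu(x)$. This single bound repairs everything: (i) it shows $\sum_z m^{(k)}(x,z)s(z)\le R^k\nu(x)^{-1}\sum_z\nu(z)s(z)<\infty$ by \eqref{condition.moy}, so every term of your recursion is finite and $\sup_n\E_x[W_n^2]<\infty$ follows (one can equivalently integrate the whole recursion against $\nu$, using Tonelli on the nonnegative variance terms and $s\ge R^2\mu^2$ to control $\sum_x\nu(x)\mu(x)^2$); (ii) it provides the dominating function $\nu(z)s(z)/\nu(x)$ that turns entrywise $R$-positivity into genuine convergence of the weighted sums, $R^{-k}\sum_z m^{(k)}(x,z)g(z)\to\mu(x)\sum_z\nu(z)g(z)$ for any $0\le g$ with $\sum_z\nu(z)g(z)<\infty$; and (iii) combined with $M^k\mu=R^k\mu$ and $|M^kf|\le c\,R^k\mu$ (so the covariance kernels are bounded by $c^2R^{2k}s$), it makes the double sums in your second step both finite and convergent to the claimed limits, which then yields $\E_x[(U_n-(\sum_y\nu(y)f(y))W_n)^2]\to0$ and, with $W_n\to W$ in $L^2$, the theorem; the specializations $f=\delta_y$ and $f\equiv1$ are then exactly as you state. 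In short: right strategy, but the proof lives or dies on the domination $m^{(k)}(x,z)\le R^k\nu(z)/\nu(x)$, which your proposal never produces.
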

As a consequence, we obtain the following corollary, as in \cite{JS}
\begin{corollary}
\label{true.empirical.measure}
In the conditions of Theorem \ref{thm.moy}, assume that for every $x \in \Lambda$ and $Z_0=\delta(x,\cdot)$, conditioned on non-extinction, there is $y\in \Lambda$ with $Z_n(y) \to \infty$ in probability as $n\to \infty$, then $Z_n/|Z_n| \to \nu$ in probability on this event. 
\end{corollary}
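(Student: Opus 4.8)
The plan is to reduce the statement to two facts: that the normalised coordinates $Z_n(y)/|Z_n|$ converge on the event $\{W>0\}$, and that this event coincides almost surely with non-extinction. First I would record the consequences of Theorem \ref{thm.moy} that I need. Taking $f=\mu$ (which satisfies $\sup_x\mu(x)/\mu(x)=1<\infty$), the nonnegative martingale $W_n:=R^{-n}\sum_{y\in\Lambda}Z_n(y)\mu(y)$ converges to $W$ in $L^2$, whence $\E_x W=\lim_n\E_x W_n=\mu(x)>0$; in particular $q(y):=\P_y(W=0)<1$ for every $y\in\Lambda$. Taking $f=\I_{\{y\}}$ gives $R^{-n}Z_n(y)\to W\nu(y)$ in probability, and (since $\inf_x\mu(x)>0$ in the situations where the corollary is applied) taking $f\equiv1$ gives $R^{-n}|Z_n|\to W$ in probability. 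Because $\nu(y)>0$ by irreducibility, on the event $\{W>0\}$ the quotient
\[
\frac{Z_n(y)}{|Z_n|}=\frac{R^{-n}Z_n(y)}{R^{-n}|Z_n|}\longrightarrow\frac{W\nu(y)}{W}=\nu(y)
\]
in probability, for every fixed $y$.

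The crux is therefore to prove that non-extinction equals $\{W>0\}$ up to a null set. One inclusion is immediate: on extinction $Z_n=0$ eventually, so $W=0$. For the reverse I would exploit the branching decomposition. Conditionally on $\mathcal F_n$, writing $c_1,\dots,c_{|Z_n|}$ for the types of the particles alive at time $n$, one has $W=R^{-n}\sum_{i=1}^{|Z_n|}W^{(i)}$ with the $W^{(i)}$ conditionally independent and $W^{(i)}$ distributed as $W$ under $\P_{c_i}$. Since the summands are nonnegative, $\{W=0\}=\bigcap_i\{W^{(i)}=0\}$ and hence
\[
\P\big(W=0\mid\mathcal F_n\big)=\prod_{i=1}^{|Z_n|}q(c_i)\le q(y)^{Z_n(y)},
\]
where $y$ is the type furnished by the hypothesis. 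As $0\le q(y)<1$ and $Z_n(y)\to\infty$ in probability on non-extinction, the right-hand side tends to $0$ in probability there; on the other hand L\'evy's zero-one law gives $\P(W=0\mid\mathcal F_n)\to\I_{\{W=0\}}$ almost surely. Comparing the two limits along a common subsequence forces $\I_{\{W=0\}}=0$ almost surely on non-extinction, i.e. $\P_x(\{W=0\}\cap\{\text{non-extinction}\})=0$.

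Combining the two steps, non-extinction $=\{W>0\}$ almost surely, and on this event $Z_n(y)/|Z_n|\to\nu(y)$ in probability for every $y$, which is the asserted convergence $Z_n/|Z_n|\to\nu$. I expect the main obstacle to be the second step: making the branching decomposition of $W$ rigorous with countably many types (the conditional independence of the $W^{(i)}$, the interchange of the $k\to\infty$ limit with the finite sum over particles alive at time $n$, and the measurability bookkeeping), together with the clean use of the hypothesis $Z_n(y)\to\infty$ to kill the factor $q(y)^{Z_n(y)}$. The convergence of the ratio on $\{W>0\}$ is then routine given Theorem \ref{thm.moy}.
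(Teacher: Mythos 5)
Your proposal is correct, and it reaches the crucial identity $\P_x(\{W=0\}\cap\Omega_{\rm surv})=0$ by a genuinely different route from the paper. The paper follows \cite{JS} and runs a fixed-point analysis of the generating function $\ff$: it first shows that the only fixed points of $\ff$ are $\qq$ and $\uno$ (this is where the hypothesis that $Z_n(y)\to\infty$ in probability on survival enters, killing the factor $s_y^{Z_n(y)}$), then checks via the one-step branching decomposition that $\rr=(\P_x(W=0))_x$ is a fixed point, and concludes $\rr=\qq$. You instead bound the conditional probability directly, $\P(W=0\mid\mathcal F_n)=\prod_i q(c_i)\le q(y)^{Z_n(y)}$ (an $n$-step branching decomposition of $W$ itself), and finish with L\'evy's zero-one law; the hypothesis enters at exactly the same place, to kill $q(y)^{Z_n(y)}$. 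The mechanisms are close cousins, but your packaging buys two things: it avoids classifying all fixed points of $\ff$, and it makes explicit a point the paper leaves tacit, namely that $q(y)=\P_y(W=0)<1$, which you obtain from $\E_y W=\mu(y)>0$ via the $L^2$ martingale limit. Without some such input the paper's dichotomy alone would still allow $\rr=\uno$, i.e.\ $W\equiv 0$, and the conclusion would not follow, so your explicit treatment is a genuine improvement in rigor; what the paper's route buys in exchange is the characterization of the full fixed-point set of $\ff$, which has independent interest. One caveat common to both arguments: the step $R^{-n}|Z_n|\to W$ needs $\inf_x\mu(x)>0$, which is not in the statement of Corollary \ref{true.empirical.measure}; the paper's proof invokes it silently and your parenthetical does the same, so this is not a gap relative to the paper. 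The final passage from $R^{-n}Z_n(y)\to W\nu(y)$ and $R^{-n}|Z_n|\to W$ to the ratio limit on $\{W>0\}$ is identical in both proofs.
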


Before embarking on the proof, we need to introduce some notation. Define the moment generating function $\ff\colon [0,1]^\Lambda \to \R^{\Lambda}$ by
\[
\ff(\ss) = (f_x(\ss), \,\, x\in \Lambda)= \left(\E_{x} \left(\prod_{y\in\Lambda} s_y^{Z_1(y)}\right), \,\, x\in\Lambda \right). 
\]
If we define 
\[
\ff_n(\ss) = (f_{nx}(\ss), \,\, x\in \Lambda)= \left(\E_{x} \left(\prod_{y\in\Lambda} s_y^{Z_n(y)}\right), \,\, x\in\Lambda \right),
\]
we get, as for single-type Galton-Watson processes, $\ff_{n+1}(\ss)= \ff(\ff_n(\ss))$. We also define $\qq:=(q_x, \, x\in\Lambda)$, with $q_x$ being the absorption probability when the process starts with one individual of type $x$, i.e. $q_x:=\P_x\left(\bigcup_{n\in\N}\{ Z_n=\mathbf 0\}\right)$.
 
\begin{proof}The proof follows mainly \cite{JS}. The function $\ff$ has at least two fixed points: $\uno=(1, x\in\Lambda)$ and $\qq$. It is also known that if $\ss$ is a fixed point of $\ff$ different from $\uno$, then we have $s_x<1$ for every $x\in \Lambda$.

For the first claim, $\uno$ is clearly a solution to $\ff(\ss)=\ss$ and for the extinction event, we have 
$$
q_x=\P_x\left(\bigcup_{n\in\N}\{Z_n=\mathbf 0\}\right)=\lim_{n\to\infty}\P_x(Z_n=\mathbf 0)=\lim_{n\to\infty}f_{nx}(\mathbf 0). 
$$
That is, $\qq=\displaystyle\lim_{n\to\infty}\ff_n(\mathbf 0)$. Since $\ff_{n+1}(\mathbf 0)=\ff(\ff_n (\mathbf 0))$ and $\ff$ is continuous in this topology, we get $\qq=\ff(\qq)$.

For the second claim, let $\ss\in [0,1]^{\Lambda}$ be a fixed point of $\ff$ with $s_{x}<1$ for some $x$ and assume there is $y\in\Lambda$ with $f_{y}(\ss)=1=s_y$. Then $\ss$ is a fixed point of $\ff_n$ for every $n$.
Let $\tt\in [0,1]^{\Lambda}$ be such that 
 $$t_{w}=1 \ \ \mbox{if}\ w\neq x$$
 $$t_{w}=s_{x}\ \ \mbox{if}\ w=x.$$
Since $\ss\leq\tt$, $f_{ny}(\ss)=1$, and the function $f_{ny}$ is monotone increasing, we get $1=f_{n y}(\tt)$ and hence 
$$
1=\sum_{\ii\in \N^{\Lambda}}\P_{y}(Z_n=\ii) \tt^{\ii}=\sum_{\ii\in \N^{\Lambda}}\P_{y}(Z_n=\ii)  s_{x}^{z_{x}}.
$$ 
Then $\P_{y}(Z_n=\ii)=0$ if $z_{x}>0$, since $\sum_{\ii\in \N^{\Lambda}}\P_{y}(Z_n=\ii)=\uno.$ So,  
\[
m^{(n)}(y,x)=\E_{y}(Z_n(x))=\sum_{\ii\in \N^{\Lambda}}\P_{y}(Z_n=\ii) z_{x}=0,
\]
for every $n\in\N$. This contradicts the irreducibility of $M$. Then $\ss<\uno$.

Let $\ss$ be a fixed point of $\ff$ different from $\uno$. We have $s_x<1$ for every $x\in \Lambda$. Denote $\Omega_{\rm surv}=\{|Z_n| \to \infty \}= \{|Z_n|\ne 0\, \text{ for very } n\ge 0\}$. Let $y$ be such that $Z_n(y)\to \infty$ in probability on $\Omega_{\rm surv}$, then
\begin{align*}
s_x & = f_{n x}(\ss)=\E_x\left(\prod_{w\in\Lambda}s_w^{Z_n(w)}\Big| \, \Omega_{\rm surv}\right)\P_x(\Omega_{\rm surv}) + \E_x\left(\prod_{w\in\Lambda}s_w^{Z_n(w)}\Big|\, \Omega_{\rm surv}^c\right)\P_x(\Omega_{\rm surv}^c).\\
\end{align*}
By dominated convergence we get 
$$
\E_x(s_y^{Z_n(y)}\prod_{w\neq y }s_w^{Z_n(w)} | \,\Omega_{\rm surv}) = \E_x(s_y^{Z_n(y)}\prod_{w\neq y }s_w^{Z_n(w)} | \,\Omega_{\rm surv})\leq \E_x(s_y^{Z_n(y)} | \,\Omega_{\rm surv}) \to 0,
$$ 
and
\[
 \E_x\left(\prod_{w\in\Lambda}s_w^{Z_n(w)}\Big|\, \Omega_{\rm surv}^c\right) \to 1
\]

and hence $s_x=q_x$ for every $x\in\Lambda$. We conclude that the only fixed points are $\qq$ and $\uno.$

Let $\rr=(r_x, \,\, x\in \Lambda)$ be the vector with coordinates $r_x=\P_x(W=0)$. It turns out that $\rr$ is a fixed point of $\ff$. To see that, we compute for $x\in\Lambda$, 
\begin{align*}
 r_x&=\P_x(W=0)=\sum_{\jj\in\N^{\Lambda}}\P_x(W=0|Z_1=\jj)\P_x(Z_1=\jj) \\
& =\sum_{\jj\in\N^{\Lambda}}\P_x(Z_1=\jj) \prod_{y\in \Lambda }\P_y(W=0)^{z_y}\\
&=f_x(\P(W=0))=f_x(\rr).
 \end{align*}

As a consequence, we get the dichotomy $\rr=\qq$ or $\rr=\uno$. Now, by Theorem \ref{thm.moy}, Taking  $f=\delta_y$,  we get
$$ \lim_{n\to\infty}\E_x((R^{-n}Z_n(y)-\nu(y)W)^2)=0,$$ and taking $f\equiv1$ and using $\inf\mu(y)>0$, we get
$$ 
\lim_{n\to\infty}\E_x((R^{-n}\sum_{y\in\Lambda}Z_n(y)-W)^2)=0.  
$$
Then $\lim_{n\to\infty}R^{-n}|Z_n|=W$ in probability and hence $\{|Z_n|\to 0\}\subseteq \{W=0\}$. Since both events have the same probability we get
\[
 \P_x(W=0|\, \Omega_{\rm surv})=0.
\]
This fact allow us to compute (on $\Omega_{\rm surv}$) the following limit in probability,
$$\lim_{n\to\infty} \frac{Z_n(y)}{|Z_n|}=\lim_{n\to\infty} \frac{R^n Z_n(y)}{R^n\sum_{y\in\Lambda}Z_n(y)}= \frac{\nu(x) W}{W}  =\nu(x).$$ 
\end{proof}

\subsection{Identification with a multitype branching process}
The process $(\eta_t)$ defined by \eqref{generator} can be identified with a multitype branching process in the following way.
Recall that $q(x,x)=-\sum_{y\ne x, y\ne 0} q(x,y)$ and $q(x)=-q(x,x)$. An individual of type $x\in \Lambda$ gives birth (and die) at an exponential time of parameter $a(x):=\beta(x)+q(x)$ and the offspring distribution is given by
\begin{itemize}
 \item with probability $\frac{\beta(x)}{a(x)}$, one child of type 1 and one child of type $x$.
 \item with probability $\frac{q(x,y)}{a(x)}$, one child of type $y$.
\end{itemize}
According to this, the mean matrix $M=(m(x,y), \, x,y \in \Lambda)$  for the skeleton chain is given by
\[
 m(x,y)= \begin{cases}
		  \frac{q(x,y)}{a(x)} & y \notin\{0,1,x\},\\
		  \frac{\beta(x)+q(x,1)}{a(x)} & y=1,\\
		  \frac{\beta(x)}{a(x)} & y=x.
               \end{cases}
\]
The expected number of individuals  for the continuous time process at time $t$ is given by the matrix $\exp(tA)$, where $A$ has entries
\[
 a(x,y)=a(x)(m(x,y)-\delta(x,y)).
\]
Here $\delta(x,y)=1$ if $x=y$ and $0$ otherwise.
\begin{proposition}\label{prop.R.positive}
Assume $\beta$ is bounded above and one of the following conditions is verified.
\begin{enumerate}
 \item There is $V\colon \Lambda \to \R_+$ such that 
 \[
\lim_{x\to\infty}V(x)=\infty, \qquad \lim_{x\to \infty}  \frac{\sum_{y\ge 0} q(x,y)V(y)}{V(x)}= -\infty.
\] 
 \item $\sup_{x\in\Lambda}\beta(x) - \inf_{x\in\Lambda}(\beta(x)-q(x,0)) < \inf_{x\in\Lambda}\beta(x)$.
\end{enumerate}
Then, $A$ is $R$-positive with right eigenvector $\mu$ and left eigenvector $\nu$ such that $\sum_{x\in \Lambda} \mu(x)\nu(x) <\infty$. Moreover, $\sum_{x\in \Lambda} \nu(x) <\infty$.
\end{proposition}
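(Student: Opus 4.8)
The plan is to deduce both cases from the two criteria of Section~\ref{rpositivity}, applied not to $A$ itself---which is not sub-Markovian, since its row sums over $\Lambda$ are positive---but to the shifted generator $\tilde A:=A-cI$, where $c:=\sup_{x\in\Lambda}\big(\beta(x)-q(x,0)\big)$. As $\beta$ is bounded above and $q(x,0)\ge0$, this constant is finite. First I would verify that $\tilde A$ is the restriction to $\Lambda$ of a genuine generator absorbed at $0$. The off-diagonal entries $\tilde a(x,y)=a(x,y)$ are nonnegative, being $q(x,y)$ for $y\notin\{0,1,x\}$ and $\beta(x)+q(x,1)$ for $y=1$; and a direct computation shows that for every $x\in\Lambda$,
\[
\sum_{y\in\Lambda}a(x,y)=\beta(x)-q(x,0),
\]
so the killing rate of $\tilde A$ is $\tilde a(x,0)=c-\big(\beta(x)-q(x,0)\big)\ge0$. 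Since $\tilde a(x,y)\ge q(x,y)$ for every off-diagonal pair, irreducibility of $\tilde A$ on $\Lambda$ is inherited from that of $Q$.

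Next I would transfer $R$-positivity back to $A$. Because $\exp(\tilde A)=e^{-c}\exp(A)$ is a positive scalar multiple of $\exp(A)$, the two matrices have the same diagonal limits $\lim_n R^n P^{(n)}_{xx}$ (the factors $e^{\pm cn}$ cancel) and the same left and right eigenvectors, the convergence parameter being merely rescaled. Exactly as in the last lines of the proof of Theorem~\ref{Lyapunov}, this means $\tilde A$ is $R$-positive iff $A$ is, with identical $\mu$ and $\nu$; so it suffices to establish $R$-positivity of $\tilde A$ and to read off the eigenvectors there.

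Under hypothesis~(1) I would apply Theorem~\ref{Lyapunov} to $\tilde A$ with the same $V$. The computation rests on the identity
\[
\sum_{y\in\Lambda_0}a(x,y)V(y)=\beta(x)V(1)+\sum_{y\in\Lambda_0}q(x,y)V(y),
\]
valid for all $x$ (including $x=1$, where the double birth must be accounted for), which isolates the type-$1$ birth term from the $Q$-part. Since $\tilde a=a-c\,\delta$, the $\tilde A$-drift of $V$ then satisfies
\[
\frac{\sum_{y\in\Lambda_0}\tilde a(x,y)V(y)}{V(x)}=\frac{\beta(x)V(1)}{V(x)}+\frac{\sum_{y\in\Lambda_0}q(x,y)V(y)}{V(x)}-c,
\]
whose first term vanishes as $x\to\infty$ ($\beta$ bounded, $V(x)\to\infty$) and whose second term tends to $-\infty$ by hypothesis~(1). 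Thus the Lyapunov condition holds and Theorem~\ref{Lyapunov} (through Theorem~\ref{thm:h1h2h3}) gives $R$-positivity of $\tilde A$ and summability of $\nu$.

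Under hypothesis~(2) I would instead check the D\"oeblin inequality $\alpha>C$ of Theorem~\ref{alpha.C} for $\tilde A$. Here $C=\sup_x\tilde a(x,0)=c-\inf_x\big(\beta(x)-q(x,0)\big)$, whereas the off-diagonal entries give $\tilde a(1)=\inf_{x\ne1}\big(\beta(x)+q(x,1)\big)\ge\inf_x\beta(x)$ and hence $\alpha\ge\inf_x\beta(x)$. Since $c\le\sup_x\beta(x)$, hypothesis~(2) yields $C\le\sup_x\beta(x)-\inf_x(\beta(x)-q(x,0))<\inf_x\beta(x)\le\alpha$, so Theorem~\ref{alpha.C} applies and furnishes bounded $\mu$ and summable $\nu$. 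Finally, the remaining assertion $\sum_x\mu(x)\nu(x)<\infty$ requires no separate work: for an irreducible $R$-recurrent matrix, $R$-positivity is equivalent to $\sum_x\mu(x)\nu(x)<\infty$ by the Vere-Jones classification~\cite{seneta-vere-jones-66}, so it holds automatically once $R$-positivity has been proved. The one genuinely delicate step will be the bookkeeping that turns $A$ into the valid absorbed generator $\tilde A$---in particular the uniform row-sum identity above and its separate check at $x=1$, together with confirming that the hypotheses translate precisely into the drift and D\"oeblin conditions---after which the criteria of Section~\ref{rpositivity} and the Vere-Jones equivalence do the rest.
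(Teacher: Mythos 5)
Your proposal is correct and follows essentially the same route as the paper: subtract a constant from the diagonal of $A$ to obtain a sub-Markovian generator $\tilde A$, apply Theorem \ref{Lyapunov} under hypothesis (1) and Theorem \ref{alpha.C} under hypothesis (2), and transfer $R$-positivity and the eigenvectors back to $A$ via the scalar relation $\exp(\tilde A)=e^{-c}\exp(A)$. The only differences are cosmetic: you shift by $c=\sup_x(\beta(x)-q(x,0))$ where the paper shifts by $\bar\beta=\sup_x\beta(x)$ (both yield a valid absorbed generator), and your drift identity correctly carries the term $\beta(x)V(1)$ and checks the row $x=1$, tidying up a small typo in the paper's computation.
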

As a consequence, we obtain the following corollaries.

\begin{corollary}\label{moy.alpha.C} In the notation of Proposition \ref{prop.R.positive}, if condition (2) is verified, $Q$ has a Yaglom limit (i.e. $\exp(tQ)(x,y)/(1-\exp(tQ)(x,0)) \to \nu(y)$ as $t \to \infty$ for every $x\in \Lambda$) and $\kappa_0>1$, then \eqref{lim.proportions} holds.
\end{corollary}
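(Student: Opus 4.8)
The plan is to realize $(\eta_t)$ as the continuous-time multitype branching process of the preceding subsection, whose mean semigroup is $\exp(tA)$, and to feed it to Moy's theorem through Corollary \ref{true.empirical.measure}. Since that corollary is phrased for discrete generations, first I would sample at integer times, $\bar Z_n:=\eta_n$. Conditionally on $\eta_n$, the configuration $\eta_{n+1}$ is the sum of $|\eta_n|$ independent one-unit-time descendant families, so $(\bar Z_n)_{n\ge0}$ is a multitype Galton--Watson process whose mean matrix is $\exp(A)$ and with $\bar Z_1(y)$ under $\P_y$ distributed as $\eta_1(y)$ from a single type-$y$ particle. Irreducibility and aperiodicity of $\exp(A)$ follow from the irreducibility of $Q$. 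By Proposition \ref{prop.R.positive}(2), $A$ is $R$-positive; as $\exp(A)$ has the same eigenvectors, $R$-positivity passes to the mean matrix, and $\kappa_0>1$ renders the process supercritical, i.e. the Perron eigenvalue $R$ of $\exp(A)$ satisfies $R>1$ in the sense of Theorem \ref{thm.moy} (this is the exponential growth of $\E_x\eta_t(x)$ recalled as item (c) in the Introduction). Finally, condition (2) gives, via Theorem \ref{alpha.C}, the two-sided bound $0<c_1\le\mu(x)\le C_1$, so $\inf_x\mu(x)>0$, while Proposition \ref{prop.R.positive} furnishes $\sum_x\nu(x)<\infty$.

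Next I would verify the second-moment hypothesis \eqref{condition.moy}. Since $\beta$ is bounded above, say by $b$, the total population started from one particle increases only by births, at total rate at most $b|\eta_t|$, and decreases only by absorptions; hence it is stochastically dominated by a Yule process of per-capita rate $b$, giving $\E_y(|\eta_1|^2)\le K$ with $K$ independent of $y$. Because $\mu\le C_1$ we have $\langle\mu,\bar Z_1\rangle\le C_1|\eta_1|$, so $\sum_y\E_y(\langle\mu,\bar Z_1\rangle^2)\,\nu(y)\le C_1^2K\sum_y\nu(y)<\infty$, which is exactly \eqref{condition.moy}. Moy's theorem then produces $W\ge0$ with $\E W^2<\infty$ and, taking $f=\delta_y$ and $f\equiv1$ (legitimate since $\inf_x\mu(x)>0$), the $L^2$ limits $R^{-n}\eta_n(y)\to W\nu(y)$ and $R^{-n}|\eta_n|\to W$. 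The non-extinction hypothesis of Corollary \ref{true.empirical.measure} holds because on $\Omega_{\rm surv}$ one has $\eta_n(y)\to\infty$ for every $y$ (item (e) of the Introduction), so $\eta_n/|\eta_n|\to\nu$ in probability on $\Omega_{\rm surv}$ along integers, and the argument in the proof of Corollary \ref{true.empirical.measure} gives $\P_x(W=0\mid\Omega_{\rm surv})=0$.

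The hard part will be upgrading this from integer times to all real $t$. I would aim to show $R^{-t}\eta_t(y)\to W\nu(y)$ and $R^{-t}|\eta_t|\to W$ in probability for continuous $t$; dividing and using $W>0$ a.s. on $\Omega_{\rm surv}$ then yields \eqref{lim.proportions}. The backbone is that $e^{-\lambda t}\langle\mu,\eta_t\rangle$ (with $\lambda$ the spectral bound of $A$, $R=e^{\lambda}$, and $A\mu=\lambda\mu$) is a nonnegative, $L^2$-bounded martingale, whose continuous-time limit is proportional to $W$. The coordinates $\eta_t(y)$ and $|\eta_t|$ are not martingales, so on each gap $t\in[n,n+1)$ I would condition on $\mathcal F_n$: there $\eta_t$ is a sum of $|\eta_n|\to\infty$ independent subfamilies with conditional mean $R^{-t}(\eta_n\exp((t-n)A))(\cdot)$, and the uniform unit-time second-moment bound of the previous paragraph controls the conditional variance. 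Since $\eta_n/|\eta_n|\to\nu$ and $\nu\exp(sA)=e^{\lambda s}\nu$, these conditional means are asymptotically $R^{-n}|\eta_n|\,\nu(\cdot)$, which already converges, and the concentration kills the fluctuations uniformly in $s\in[0,1)$. This interpolation is the step I expect to require the most care.

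Finally, the limiting measure is the normalized left eigenvector $\nu$ of $A$ (equivalently of $\exp(A)$). The Yaglom-limit hypothesis is what lets me identify this eigenvector with the quasistationary object attached to the sub-Markovian transformation of $A$ used to prove Proposition \ref{prop.R.positive} under condition (2), so that the $\nu$ in \eqref{lim.proportions} is the measure named in the statement; that is where the assumption on $Q$ is consumed.
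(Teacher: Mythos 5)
The core gap is in how you verify the divergence hypothesis of Corollary \ref{true.empirical.measure}. You invoke item (e) of the Introduction (on $\Omega_{\rm surv}$, $\eta_n(y)\to\infty$ for every $y$), but (e) is only an informal assertion there, justified by one sentence about coupling with a single-type Galton--Watson process ``and by means of irreducibility''; it is not a proved statement at the level of generality of the model. Indeed, if (e) could be cited freely, the Yaglom-limit assumption on $Q$ appearing in the statement of the corollary would be superfluous. That assumption is precisely what the paper uses to prove the divergence: every non-ancestral particle is born at state $1$, so a particle still alive $t$ units after its birth sits at $1$ with conditional probability $\exp(tQ)(1,1)/(1-\exp(tQ)(1,0))$, which is positive for each finite $t$ and, by the existence of the Yaglom limit, converges to $\nu(1)>0$ as $t\to\infty$; hence it is bounded below by some $c>0$ uniformly in $t$. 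This gives
$\P_x(Z_n(1)\le K\mid \Omega_{\rm surv}) \le \E_x\left[|Z_n|^K(1-c)^{|Z_n|-K}\mid \Omega_{\rm surv}\right]$,
which decays geometrically, and Borel--Cantelli yields $Z_n(1)\to\infty$ a.s.\ on survival. Your proposal instead relegates the Yaglom hypothesis to ``identifying'' the limiting eigenvector at the very end, but no such identification step exists or is needed: Moy's theorem and Corollary \ref{true.empirical.measure} already produce convergence to the normalized left eigenvector $\nu$ of the mean matrix. So in your argument the hypothesis that the statement singles out does no mathematical work, while the step it is designed to support rests on an unproved claim.

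The rest of your proposal (discrete skeleton with mean matrix $\exp(A)$, second moment \eqref{condition.moy} via domination by a binary-branching process of rate $\sup_x\beta(x)$ together with $\mu\le C_1$ and $\sum_x\nu(x)<\infty$ from Theorem \ref{alpha.C}, and $\inf_x\mu(x)>0$ to take $f\equiv 1$) matches the paper. For the passage from discrete to continuous time, however, the paper avoids your martingale-plus-conditional-variance interpolation entirely: it shows that $g_y(t)=\P_x\left(\left|\eta_t(y)/|\eta_t|-\nu(y)\right|>\epsilon\right)$ is continuous in $t$ (by bounding the probability that the ratio changes on $[s,t]$ by $\E_x(1-e^{-|\eta_s|(t-s)})$), observes that the discrete argument applies to any skeleton $t=0,\delta,2\delta,\dots$, so $g_y(n\delta)\to 0$ for every $\delta>0$, and then concludes $g_y(t)\to 0$ from Kingman's theorem on discrete skeletons \cite[Theorem 2]{kingman-63a}. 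Your interpolation could likely be completed (the conditional-variance bound is sound), but it needs a form of $\eta_n/|\eta_n|\to\nu$ strong enough to be uniform over the offset $s\in[0,1)$, which you acknowledge but do not carry out; the skeleton theorem makes all of that unnecessary.
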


\begin{corollary}\label{moy.birth.death} In the notation of Proposition \ref{prop.R.positive}, if $Q$ is the rates matrix of a one dimensional birth and death process that verifies,
\begin{enumerate}
 \item[(a)] $(q(x,x-1))_{x\in \N}$ is monotone increasing, \label{uno}
 \item[(b)] $\lim_{x\to\infty} \frac{q(x,x+1)}{q(x,x-1)} = \ell <1,$ \label{dos}
 \item[(c)] $\sum_{x\in\N} \frac{1}{q(x,x-1)} <\infty.$ \label{tres}
\end{enumerate}
and $\kappa_0>1$, then \eqref{lim.proportions} holds.
\end{corollary}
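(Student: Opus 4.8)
The plan is to obtain the statement from the general machinery already in place, namely Proposition \ref{prop.R.positive}, Moy's theorem \ref{thm.moy} and Corollary \ref{true.empirical.measure}; the task is to verify their hypotheses for the birth-and-death model out of (a)--(c) and $\kappa_0>1$. Throughout, $\beta$ is bounded above since we are in the notation of Proposition \ref{prop.R.positive}.

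First I would check condition (1) of Proposition \ref{prop.R.positive} using the linear Lyapunov function $V(x)=x$. Writing $\lambda_x=q(x,x+1)$ and $\mu_x=q(x,x-1)$, a direct computation gives $\sum_{y\ge0}q(x,y)V(y)=\lambda_x-\mu_x$, so the relevant quantity is $(\lambda_x-\mu_x)/x$. Assumption (b) gives $\lambda_x-\mu_x\le\tfrac{\ell-1}{2}\mu_x<0$ for all large $x$, while (a) and (c) together force $\mu_x/x\to\infty$: if not, monotonicity of $(\mu_x)$ would produce infinitely many disjoint blocks $[x_k/2,x_k]$ each contributing a fixed positive amount to $\sum_x 1/\mu_x$, contradicting (c). Combining the two bounds yields $(\lambda_x-\mu_x)/x\to-\infty$, so condition (1) holds and $A$ is $R$-positive with summable left eigenvector $\nu$ and right eigenvector $\mu$ satisfying $\sum_x\mu(x)\nu(x)<\infty$.

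Next I would cast the process in the form required by Theorem \ref{thm.moy}. Sampling the continuous-time branching process at integer times yields a discrete-time multitype branching process with mean matrix $\exp(A)$, and, by the eigenvalue/eigenvector correspondence noted in the proof of Theorem \ref{Lyapunov}, $R$-positivity of $A$ transfers to $\exp(A)$ with the same $\nu$ and $\mu$. Supercriticality ($R>1$) comes from $\kappa_0>1$ as in the discussion of the supercritical regime in the introduction; irreducibility follows from that of $Q$ together with the creation of type-$1$ particles, and aperiodicity from $m(x,x)=\beta(x)/a(x)>0$. The divergence hypothesis of Corollary \ref{true.empirical.measure} is precisely the almost-sure statement $\eta_t(x)\to\infty$ on survival recorded in the introduction.

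The main obstacle is the second-moment condition \eqref{condition.moy}, together with the requirement $\inf_x\mu(x)>0$ in the last assertion of Theorem \ref{thm.moy}; both need two-sided control of $\mu$ beyond the summability $\sum_x\mu(x)\nu(x)<\infty$ furnished by the proposition. I would obtain this by analysing the eigenvector equation $A\mu=\theta\mu$, which for large $x$ is asymptotically the constant-coefficient recursion $\ell\,\mu(x+1)-(1+\ell)\mu(x)+\mu(x-1)=0$, whose characteristic roots are $1$ and $1/\ell>1$; being the minimal positive eigenvector, $\mu$ must be the bounded solution and converges to a positive constant, so $0<\inf_x\mu(x)\le\sup_x\mu(x)<\infty$. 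Granting this, the offspring law of the sampled chain is $\eta_1$, whence $\sum_x Z_1(x)\mu(x)\le\|\mu\|_\infty\,|\eta_1|$, and $\E_y(|\eta_1|^2)$ is bounded uniformly in $y$ by dominating $|\eta_t|$ by a Yule process of rate $\sup_x\beta(x)<\infty$; thus \eqref{condition.moy} reduces to $\sum_x\nu(x)<\infty$, which holds. Corollary \ref{true.empirical.measure} then gives $\eta_n/|\eta_n|\to\nu$ in probability along integer times, and the passage to continuous $t$ is routine, for instance through the reproductive martingale $e^{-\theta t}\sum_x\eta_t(x)\mu(x)$, yielding \eqref{lim.proportions}. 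The delicate point throughout is the two-sided control of $\mu$, on which both the moment condition and the identification of the limit depend.
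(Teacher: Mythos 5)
Your opening step is exactly the paper's: with $V(x)=x$, condition (b) gives a drift bounded by $-(1-\ell)q(x,x-1)$ up to constants, and (a)+(c) force $q(x,x-1)/x\to\infty$, so condition (1) of Proposition \ref{prop.R.positive} applies; likewise your reduction of \eqref{condition.moy} to $\sum_y \E_y|Z_1|^2\nu(y)<\infty$ (once $\sup_x\mu(x)<\infty$ is available) via domination by a binary-branching/Yule process is the paper's argument. The problem is the step you yourself identify as the crux: the control of $\mu$.

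Your derivation of $0<\inf_x\mu(x)\le\sup_x\mu(x)<\infty$ from the eigenvector recursion is not a proof. First, for an $R$-recurrent matrix the positive right eigenvector at the decay parameter is unique up to scalars, so there is no ``minimal positive eigenvector'' selection to invoke: one must show that this unique vector is bounded above and below, which is precisely the point in question. Second, Poincar\'e--Perron type results for recursions with asymptotically constant coefficients control at most the ratio $\mu(x+1)/\mu(x)$; a solution whose ratio tends to the root $1$ can still grow like $x$ or decay like $1/x$, so ratio information never yields two-sided bounds, let alone convergence to a positive constant. Third, the equation is not the homogeneous recursion you wrote: every row carries the extra term $\beta(x)\mu(1)$ coming from births at site $1$, together with the eigenvalue term $\theta\mu(x)$; these are $o(q(x,x-1))$ but they are exactly the kind of perturbations the constant-coefficient heuristic cannot absorb without further argument. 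As it stands, the key technical claim is asserted, not proved. (Two smaller soft spots: you take the divergence hypothesis of Corollary \ref{true.empirical.measure} from statement (e) of the introduction, which is only sketched there -- the paper instead proves $Z_n(1)\to\infty$ a.s. on survival via the Yaglom limit of $Q$; and the continuous-time passage, which the paper does through Kingman's skeleton theorem with a continuity estimate, is not ``routine'' enough to dismiss in one clause.)

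The paper's route to the bound on $\mu$ is quite different and uses (a)--(c) a second time, probabilistically: these conditions imply that the birth-and-death process with rates $Q$ ``comes down from infinity'', i.e. $\sup_x \E_x(\tau_1)\le\kappa<\infty$ (by Bansaye--M\'el\'eard--Richard). Then, from the $R$-positivity characterization $\mu(x)/\mu(1)=\lim_{t\to\infty}\E_x|\eta_t|/\E_1|\eta_t|$ and the structural fact that every non-ancestral particle is born at site $1$, the ancestor's children are dominated in number by a Poisson process of rate $\bar\beta$ run for the absorption time $T_x$, giving $\E_x|\eta_t|\le \bar\beta\,\E(T_x)\,\E_1|\eta_t|$ and hence $\mu(x)/\mu(1)\le\kappa\bar\beta$. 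Note that this yields only the upper bound on $\mu$, which is what \eqref{condition.moy} needs; if you want your argument to go through, you should replace the recursion heuristic with an argument of this probabilistic type rather than try to patch the asymptotic analysis of the difference equation.
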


\begin{proof}[Proof of Proposition \ref{prop.R.positive}]
Let $\bar \beta = \sup_x\beta(x)$ and consider the matrix $\tilde A$ with coefficients $\tilde a(x,y)=a(x,y)- \bar\beta\delta(x,y)$. Since $\sum_{y\ne x} \tilde a(x,y) \le 0$ for every $x\in \N$, we can think of $\tilde A$ as the rates matrix of a process absorbed at zero that we call $X=(X_t, t\ge 0)$. Observe that the absorption rate from state $x$ is given by $-\sum_{y\ne x} \tilde a(x,y)$. We are going to prove that $X$ is $R$-positive and hence the same holds for $\exp(tA)$ for every $t >0$. 
Assume condition {\em 1.} is verified, then we compute for this chain, for $x\ne 0, 1$, 
\begin{align}
\label{lyapunov.r.positive}
 \dot V(x) &= \sum_{y\in \Lambda0} \tilde a(x,y)V(y)= \sum_{y\ne x} q(x,y)V(y) + \beta(1)V(1) - (\bar \beta +a(x) -\beta(x))V(x)\\
 & = \sum_{y\in \Lambda_0} q(x,y) V(y) + \beta(1)V(1) - \bar \beta.
\end{align}
Hence, we have $\dot V(x)/V(x) \to -\infty$ and $V(x)\to \infty$ as $x\to \infty$ and we can apply Theorem \ref{Lyapunov} to get the $R$-positivity of $\tilde A$, $A$ and ${\rm exp}(tA)$ for every $t >0$.

If condition {\em 2} is verified, we apply Theorem \ref{alpha.C} instead. We consider again the Markov process with rates matrix $\tilde A$. For this matrix we have $ \alpha(1) = \inf_x \tilde a(x,1) \ge  \inf_x {\beta(x)}$ and
\[
C=\sup_x\tilde a(x,0) \le  \sup_x q(x,0) - \beta(x) + \bar \beta = \sup_{x\in\Lambda}\beta(x) - \inf_{x\in\Lambda}(\beta(x)-q(x,0)).
\]
Then, we can apply Theorem \ref{alpha.C}.

The limit \eqref{lim.proportions} will be a consequence of Theorem \ref{thm.moy} and some additional considerations. Once $R-$ positivity is proved, we need to check \eqref{condition.moy}. This is a delicate condition which is not simple to prove in general. We will prove that in fact \eqref{condition.moy} is verified under our hypotheses. Next, we also need to show that Corollary \ref{true.empirical.measure} can be applied to get \eqref{lim.proportions}.

\begin{proof}[Proof of Corollary \ref{moy.alpha.C}]

Under condition (2) we have shown in Theorem \ref{alpha.C} that $\mu$ is bounded above and then \eqref{condition.moy} reduces to 
\[
 \sum_{y\in \Lambda}\E_y|Z_1|^2\nu(y) <\infty.
\]
Observe that the total number of particles in the system at time $t$ is stochastically dominated (uniformly in $y$) by a continuous time Galton-Watson process with binary branching and reproduction rate $\sup_x \beta(x)$, which has a second moment uniformly bounded in $y$. Since $\sum_x \nu(x) <\infty$, \eqref{condition.moy} holds in this case and we get for $y\in \Lambda$,
\[
\E_x[R^nZ_n(y) - W\nu(y)]^2 \to 0.
\]
Since $\inf_x\mu(x)>0$, we also get
\[
\E_x[R^n|Z_n| - W]^2 \to 0.
\]
Now we prove that for every $x \in \Lambda$ there exists a $y \in \Lambda$ such that $P_x(Z_n(y) \to \infty)=1$. Observe that, with the exception of the ancestral particle, every other particle is born at one and hence, if it is still in the system (has not been absorbed yet), the probability of being at one is (for some $t$) $\exp(tQ)(1,1)/(1-\exp(tQ)(1,0))$, the conditional probability of being at one given that it was not absorbed, which has a positive limit and is positive for every finite time due to the existence of Yaglom limit for $Q$. Hence its infimum $c$ is larger than zero. We have for every $K$ 
\[
 \P_x(Z_n(1)\le K|\, \Omega_{\rm surv}) \le \E_x[|Z_n|^K(1-c)^{|Z_n|-K}|\, \Omega_{\rm surv}].
\]
By bounded convergence theorem we have 
\[
\lim_{n\to\infty} (1-c/2)^{-n}\E_x[|Z_n|^K(1-c)^{|Z_n|-K}|\,\Omega_{\rm surv}]=0,
\]
and using Borel-Cantelli's Lemma we get $Z_n(1)\to \infty$ a.s. in $\Omega_{\rm surv}$ and by Corollary \ref{true.empirical.measure} 
\[
\frac{\eta_n(x)}{|\eta_n|} \to \nu(x), \qquad \text{in probability for every } x\in \Lambda.
\]
To go from discrete to continuous time we use \cite[Theorem 2]{kingman-63a}. The following computations are conditioned on $\Omega_{\rm surv}$. Given $\epsilon>0,$ for every $y\in\Lambda$ we consider the function $g_y\colon [0,\infty) \to [0,1]$ defined by
$$
g_y(t)=\P_x\left(\left|\frac{\eta_t(y)}{|\eta_t|}-\nu(y)\right|>\epsilon\right).
$$ 
We show that $g_y$ is continuous. Let $0\leq s<t$. We have,
\begin{align*}
\Big|\P_x\left(\left|\frac{\eta_t(y)}{|\eta_t|} - \nu(y)\right|>\epsilon\right) & -\P_x\left(\left|\frac{\eta_s(y)}{|\eta_s|}-\nu(y)\right|>\epsilon\right)\Big|
\\ & \le \E_x\left(\left|\mathbf{1}\left\{\left|\frac{\eta_t(y)}{|\eta_t|}-\nu(y)\right|>\epsilon\right\}-
\mathbf{1}\left\{\left|\frac{\eta_s(y)}{|\eta_s|}-\nu(y)\right|>\epsilon\right\}\right|\right)\\
 & \le \E_x\left(\mathbf{1}\left\{\frac{\eta_t(y)}{|\eta_t|}\neq\frac{\eta_s(y)}{|\eta_s|}\right\}\right)\\
 &\le \E_x(1-e^{-|\eta_s|(t-s)}).
\end{align*}
By dominated convergence theorem we get the continuity of $g_y(t)$. Observe that if instead of considering the process $\eta_t$ at times $t=0,1,2, \dots$ we would have been considered it at times $t=0, \delta, 2\delta, \dots$ we would have been obtained exactly the same result.
That is, $g_y(n\delta) \to 0$ as $n\to \infty$, for every $\delta >0$. By \cite[Theorem 2]{kingman-63a}, we have the convergence in $t$,
$$
\lim_{t\to\infty}g_y(t)=\lim_{t\to\infty}\P_x\left(\left|\frac{\eta_t(y)}{|\eta_t|}-\nu(y)\right|>\epsilon\right)= 0,
$$
proving the result \eqref{lim.proportions}.
\end{proof}

\begin{proof}[Proof of Corollary \ref{moy.birth.death}]
We are going to prove that $\mu$ is bounded. So first, consider the process with rates $\tilde A$ as in the proof of Proposition \ref{prop.R.positive} and take $V(x)=x$. For this process we have computed in \eqref{lyapunov.r.positive} 
\begin{align*}
\dot V(x) & = \sum_{y\ge 0} q(x,y) V(y) + \beta(1)V(1) - \bar \beta\\
&= -q(x,x-1) + q(x,x+1)  + \beta(1)V(1) - \bar \beta \\
& \le -(1-\ell)q(x,x-1)  + \beta(1)V(1) - \bar \beta,
\end{align*}
by (b). By (a) and (c) we have,
\[
 \frac{\dot V(x)}{V(x)} \le  x^{-1}(-(1-\ell)q(x,x-1)  + \beta(1)V(1) - \bar \beta) \to -\infty, \quad \text{as } x\to \infty.
\]
Hence, $\tilde A$ and $A$ are $R$-positive and Yaglom limit exists. Next, observe that (a), (b) and (c) implies that the process with rates $Q$ ``comes down from infinity'', meaning that $\sup_x\E_x(\tau_1) <\infty$, \cite[Lemma 2.2 and Proposition 2.2]{BMR}.

We need to find a bound for $\mu$.  Since $A$ is $R$-positive, $\mu$ has the following characterization
\begin{equation}
\label{char.mu}
\frac{\mu(x)}{\mu(1)}=\lim_{t\to \infty} \frac{\E_x|\eta_t|}{\E_1|\eta_t|}.
\end{equation}
To bound $\E_x|\eta_t|$ observe that if we start with one particle at $x$ (that we call the ancestral particle), this particles will produce during its whole life (before being absorbed) a random number of particles that can be bounded by the number of occurrences in a one-dimensional Poisson process with rate $\sup_x \beta(x)$ at an independent random time $T_x$ (the absorption time of the process with rates $Q$, started at $x$). We have just proved that the expectation of $T_x$ is bounded above by some constant $\kappa$. Each particle produced by the ancestral particle is born at $1$ and hence, its expected number of descendants by time $t$ is bounded by $\E_1|\eta_t|$. Plugging into \eqref{char.mu} we get
\[
\frac{\mu(x)}{\mu(1)}=\lim_{t\to \infty} \frac{\E_x|\eta_t|}{\E_1|\eta_t|} \le \lim_{t\to \infty} \frac{\E(T_x)\bar \beta\E_1|\eta_t|}{\E_1|\eta_t|} \le \kappa \bar \beta <\infty. 
\]
The rest of the proof is as in case 2 since we have shown $R-$positivity of $\tilde A$, existence of Yalgom limit for $\tilde A$ and boundedness of $\mu$.
\end{proof}

\begin{center}
{\bf Acknowledgments} 
\end{center}
We thank Pablo Ferrari for several years of enlightening conversations on this topic. P. Groisman and K. Ravishankar were supported by Simons
collaboration grant number 281207 awarded to K. Ravishankar. P. Groisman and A. Ferrari are partially founded by UBACYT 20020160100147BA
and PICT 2015-3154. The authors want to thank NYU-Abu Dhabi where part of this work was done during  P. Groisman's visit for hospitality and support.

\end{proof}


\begin{thebibliography}{BMR16}

\bibitem[BMR16]{BMR}
Vincent Bansaye, Sylvie M\'{e}l\'{e}ard, and Mathieu Richard.
\newblock Speed of coming down from infinity for birth-and-death processes.
\newblock {\em Adv. in Appl. Probab.}, 48(4):1183--1210, 2016.

\bibitem[CV16]{champagnat-villemonaisPTRF}
Nicolas Champagnat and Denis Villemonais.
\newblock Exponential convergence to quasi-stationary distribution and
  {$Q$}-process.
\newblock {\em Probab. Theory Related Fields}, 164(1-2):243--283, 2016.

\bibitem[CV17]{champagnat-villemonais}
Nicolas Champagnat and Denis Villemonais.
\newblock General criteria for the study of quasi-stationarity.
\newblock {\em arXiv preprint arXiv:1712.08092}, 2017.

\bibitem[CV19]{champagnat2019}
Nicolas Champagnat and Denis Villemonais.
\newblock Practical criteria for r-positive recurrence of unbounded semigroups.
\newblock {\em arXiv preprint arXiv:1904.08619}, 2019.

\bibitem[Eng07]{Eng1}
J\'{a}nos Engl\"{a}nder.
\newblock Branching diffusions, superdiffusions and random media.
\newblock {\em Probab. Surv.}, 4:303--364, 2007.

\bibitem[Eng15]{Eng2}
J\'{a}nos Engl\"{a}nder.
\newblock {\em Spatial branching in random environments and with interaction},
  volume~20 of {\em Advanced Series on Statistical Science \& Applied
  Probability}.
\newblock World Scientific Publishing Co. Pte. Ltd., Hackensack, NJ, 2015.

\bibitem[FKM96]{FKM}
P.~A. Ferrari, H.~Kesten, and S.~Mart{\'i}nez.
\newblock {$R$}-positivity, quasi-stationary distributions and ratio limit
  theorems for a class of probabilistic automata.
\newblock {\em Ann. Appl. Probab.}, 6:577--616, 1996.

\bibitem[FM07]{FM}
Pablo~A. Ferrari and Nevena Mari{\'c}.
\newblock Quasi stationary distributions and {F}leming-{V}iot processes in
  countable spaces.
\newblock {\em Electron. J. Probab.}, 12:no. 24, 684--702, 2007.

\bibitem[IKS00]{IKS}
K~Iwata, K~Kawasaki, and N~Shigesada.
\newblock A dynamical model for the growth and size distribution of multiple
  metastatic tumors.
\newblock {\em Journal of theoretical biology}, 203(2):177--186, 2000.

\bibitem[JR95]{jacka}
S.~D. Jacka and G.~O. Roberts.
\newblock Weak convergence of conditioned processes on a countable state space.
\newblock {\em J. Appl. Probab.}, 32(4):902--916, 1995.

\bibitem[JS17]{JS}
Matthieu Jonckheere and Santiago Saglietti.
\newblock On laws of large numbers in $l^2$ for supercritical branching markov
  processes beyond $\lambda$-positivity.
\newblock {\em arXiv preprint arXiv:1711.05674}, 2017.

\bibitem[Kin63a]{kingman-63a}
J.~F.~C. Kingman.
\newblock Ergodic properties of continuous-time {M}arkov processes and their
  discrete skeletons.
\newblock {\em Proc. London Math. Soc. (3)}, 13:593--604, 1963.

\bibitem[Kin63b]{Kingman}
J.~F.~C. Kingman.
\newblock The exponential decay of {M}arkov transition probabilities.
\newblock {\em Proc. London Math. Soc. (3)}, 13:337--358, 1963.

\bibitem[Moy67]{moy}
Shu-teh~C. Moy.
\newblock Extensions of a limit theorem of {E}verett, {U}lam and {H}arris on
  multitype branching processes to a branching process with countably many
  types.
\newblock {\em Ann. Math. Statist.}, 38:992--999, 1967.

\bibitem[Str03]{S}
Jens Struckmeier.
\newblock {\em A mathematical investigation of a dynamical model for the growth
  and size distribution of multiple metastatic tumors}.
\newblock Fachbereich Mathematik der Univ. Hamburg, 2003.

\bibitem[SVJ66]{seneta-vere-jones-66}
E.~Seneta and D.~Vere-Jones.
\newblock {On quasi-stationary distributions in discrete-time {M}arkov chains
  with a denumerable infinity of states}.
\newblock {\em J. Appl. Probability}, 3:403--434, 1966.

\bibitem[Tri05]{Tr05}
L.~Triolo.
\newblock Space structures and different scales for many-component biosystems.
\newblock {\em Markov Process. Related Fields}, 11(2):389--404, 2005.

\end{thebibliography}
\end{document}